\theoremstyle{definition}
\newtheorem{theo+}           {Theorem}
\newtheorem{prop+}           {Proposition}
\newtheorem{coro+}           {Corollary}
\newtheorem{lemm+}           {Lemma}
\newtheorem{conjecture}      {Conjecture}
\newtheorem{defi+}           {Definition}
\newtheorem{problem}         {Problem}
\newtheorem*{pb-prime}       {Problem 1$\mathbb{'}$}
\newtheorem{not+}            {Notation}
\theoremstyle{remark}
\newtheorem{rema+}           {Remark}
\newtheorem{example}         {Example}
\newenvironment{theorem}{\begin{theo+}}{\end{theo+}}
\newenvironment{proposition}{\begin{prop+}}{\end{prop+}}
\newenvironment{lemma}{\begin{lemm+}}{\end{lemm+}}
\newenvironment{remark}{\begin{rema+}}{\end{rema+}}
\newenvironment{definition}{\begin{defi+}}{\end{defi+}}
\newcommand{\al}{\alpha}
\newcommand{\be}{\beta}
\newcommand{\ga}{\gamma}
\newcommand{\bC}{\mathbb C}
\newcommand{\bR}{\mathbb R}
\newcommand{\cD}{\mathcal D}
\newcommand{\C}{\mathcal C}
\newcommand{\eps}{\epsilon}
\newcommand{\conv}{\mathrm{conv}}
\newcommand{\RR}{\mathcal R}
\newcommand{\De}{\Delta}
\newcommand{\Area}{\mathrm{Area}}
\newcommand{\Ar}[1]{|[{#1}]|}
\newcommand{\x}{\mathbf {x}}
\newcommand \M{\mathfrak{M}}
\newcommand \MM {\mathcal M}
\newcommand \prt {\partial}
\numberwithin{equation}{section}
\begin{document}

\title[Polygonal measures with vanishing harmonic moments ]
{On  polygonal measures with vanishing harmonic moments}

\author [Dmitrii V. Pasechnik] {Dmitrii V.  Pasechnik}
\address {School of Physical and Mathematical Sciences, Nanyang Technological University, 21 Nanyang Link, 637371 Singapore}
\email{dima@ntu.edu.sg}

\author[Boris Shapiro]{Boris Shapiro}
\address{Department of Mathematics, Stockholm University, SE-106 91 Stockholm,
      Sweden}
\email{shapiro@math.su.se}

\date{\today}

\keywords{potential theory, harmonic moments, polygonal measures}
\subjclass[2010]{Primary 44A60; Secondary 31B20}
\begin{abstract}  
A  polygonal measure is the sum of finitely many 
real constant density measures supported on triangles in $\bC$. 
Given a finite set $S\subset \bC$, we study the existence of  polygonal measures spanned by
triangles with vertices in $S$, which have all  harmonic
moments vanishing.  For $S$ generic, we show that the dimension of the linear space of
such measures is $\binom{|S|-3}{2}$. 

We also investigate the situation where the resulting density 
attains only values $0$ or $\pm 1$, which corresponds to pairs of polygons
of unit density having the same logarithmic potential at $\infty$.
We show that such a signed measure does not exist if $|S|\leq 5$, but  
for each $n\geq 6$ there exists an $S$, with $|S|=n$, giving rise to such a signed measure.
\end{abstract}

\dedicatory{To the memory of Andrei Zelevinsky}
\maketitle

\section{Introduction and main results}\label{s1}

Inverse problems in logarithmic potential theory have attracted substantial  attention since the publication of the fundamental paper \cite{Nov}, where P.S.~Novikov, in particular,  proved that two convex (or, more generally, star-shaped) domains in $\bC$  with unit  density  cannot have the same logarithmic potential near $\infty$. Notice that  the knowledge of the germ of a logarithmic potential of a finite compactly supported Borel  measure $\mu$  at $\infty$  is equivalent to the knowledge  of the sequence of its harmonic moments $m_j(\mu),\;j=0,1,\ldots ,$ where the $j$-th harmonic moment of  $\mu$  is defined by: 
$$m_j(\mu)=\int_{\bC}z^jd\mu(z).$$

More precisely, if $$\mathfrak u_\mu(z):=\int_\bC \ln|z-\xi|d\mu(\xi)$$ is the logarithmic potential of  $\mu$ and 
$$\mathfrak C_\mu(z):=\int_\bC \frac{d\mu(\xi)}{z-\xi}=\frac{\mathfrak \prt \mathfrak u_\mu(z)}{\prt z}$$ is its Cauchy transform then the Taylor expansion of $\mathfrak C_\mu(z)$  at $\infty$ has the form: 
$$\mathfrak C_\mu(z)=\frac{m_0(\mu)}{z}+\frac{m_1(\mu)}{z^2}+\frac{m_2(\mu)}{z^3}+\ldots .$$

Thus Novikov's result can be reformulated as the statement that two convex domains in $\bC$ with unit density cannot have coinciding sequences of harmonic moments. It is well-known that already for non-convex domains with unit density the
uniqueness in this problem no longer holds. For instance, examples of  pairs
of non-convex polygons with the same logarithmic potential near $\infty$ 
can be found on \cite[p.~333]{BroSt}, see Fig.~\ref{fg:TAD} below. 
The class of general polygons as well as domains bounded by lemniscates has
attracted a substantial attention in this area.     Several authors have also
considered  the class  of  polynomial densities instead of the unit density. 

By a {\it convex polygon} we mean  the convex hull of finite many points in the
plane, at least $3$ of which are non-collinear. A general {\it polygon} is  the
set-theoretic union of finitely many convex polygons.  By a {\it vertex} of a
polygon we mean a point of its boundary such that its sufficiently small
$\eps$-neighborhood in the polygon is different from  a half-disk of radius
$\eps$.   

Given an open set $D\subset \bC$, define its {\em standard measure } $$\mu_D=\chi_Ddxdy,$$ where $\chi_D$ is the characteristic function of $D$. 
The same measure is associated with the closure of $D$.
We say that two polygons in $\bC$ are {\em equipotential} if their standard measures create coinciding logarithmic potential outside their union.  Below we present one of the simplest examples of pairs of equipotential polygons  given in \cite[Example~1]{BroSt}. 

\begin{example}\label{ex:gabr}
Consider the $6$-tuples 
$T=\{\pm\sqrt{3}\pm I,\pm 2I\}$ and
$T'=\{\pm\frac{1\pm\sqrt{3}I}{2},\pm 1\}$. Let $F\subset\bC$ be 
the difference of the convex hull of $T$ and the union of the set of 6 triangles
obtained as the orbit of the triangle with nodes 
$(\sqrt{3}+ I,\sqrt{3}- I, 1)$ under the rotation by $\frac{\pi}{3}$, see Fig.~\ref{fg:TAD}.
Let $F'\subset\bC$ be the difference of the convex hulls of $T$ and of $T'$.
Then 
 $F$ and $F'$ have the same
logarithmic potential.  
\end{example}

\begin{figure}[h]
\begin{center}
\begin{picture}(360,100)(0,0)
\begingroup
\everymath{\scriptstyle}
\scriptsize

\put(77,-3){$-2I$}
\put(82,85){$2I$}
\put(35,14){$-\sqrt{3}-I$}
\put(135,14){$\sqrt{3}-I$}
\put(35,70){$-\sqrt{3}+I$}
\put(135,70){$\sqrt{3}+I$}
\put(105,55){$e^{\frac{\pi I}{3}}$}
\put(105,25){$e^{-\frac{\pi I}{3}}$}
\put(70,55){$e^{\frac{2\pi I}{3}}$}
\put(70,25){$e^{-\frac{2\pi I}{3}}$}
\put(68,40){$-1$}
\put(118,40){$1$}

\put(50,20){\line(2,3){15}}
\put(50,20){\line(1,0){30}}
\put(80,20){\line(2,-3){15}}
\put(95,-3){\line(2,3){15}}
\put(110,20){\line(1,0){30}}
\put(140,20){\line(-2,3){15}}
\put(50,65){\line(2,-3){15}}
\put(50,65){\line(1,0){30}}
\put(80,65){\line(2,3){15}}
\put(95,88){\line(2,-3){15}}
\put(110,65){\line(1,0){30}}
\put(140,65){\line(-2,-3){15}}

\put(180,20){\line(0,1){45}}
\put(180,20){\line(2,-1){45}}
\put(180,65){\line(2,1){45}}
\put(225,88){\line(2,-1){45}}
\put(270,65){\line(0,-1){45}}
\put(270,20){\line(-2,-1){45}}

\put(210,20){\line(1,0){30}}
\put(210,20){\line(-2,3){15}}
\put(195,43){\line(2,3){15}}
\put(210,65){\line(1,0){30}}
\put(240,65){\line(2,-3){15}}
\put(240,20){\line(2,3){15}}

\endgroup
\end{picture}
\end{center}
\caption{Two equipotential polygons: $F$ on the left, $F'$ on the right. \label{fg:TAD}}
\end{figure}
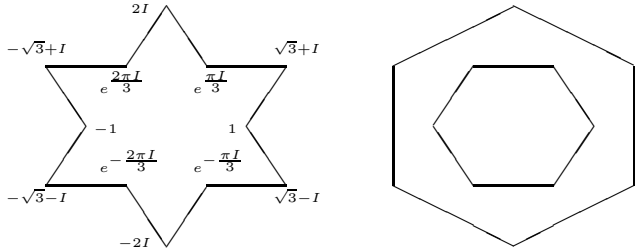

Notice that if different polygons with constant (but not necessarily unit)
density  have the same logarithmic potential near $\infty$ then they must have
the same set  of vertices, see \cite[Corollary~2 and Lemma~2]{BroSt}. (The
coincidence of the logarithmic potential near $\infty$ implies even more
restrictions on the polygons than just the coincidence of their set of
vertices, cf. \cite{BroSt}.)  

Taking this fact into account we pose the following  {\it classical inverse logarithmic potential problem for polygons in $\bC$}. 

\begin{problem}\label{prob:one} Given a finite set $S\subset \bC$, determine
whether there exist two equipotential polygons whose sets of vertices 
coincide with $S$.  
\end{problem}

One can  show that for generic $S$ no  pairs of equipotential polygons exist. 

\begin{definition} \label{def:sigmes}
A complex (respectively, real) {\em polygonal measure}  $\mu:=\mu(\cD)$ is the sum 
\begin{equation}\label{eq:noodes}
\mu:=\sum_{\Delta\in\cD} c_\Delta\mu_\Delta, \quad c_\Delta\in\bC\ 
\text{(respectively, $c_\Delta\in\bR$)},
\end{equation}
where $\cD$ is a finite set of closed triangles in the plane. 
The set of vertices of the triangles $\Delta\in\cD$ with $c_\Delta\neq 0$
in \eqref{eq:noodes}  is called the set of nodes of this decomposition.
\end{definition}

Notice that the decomposition \eqref{eq:noodes} of a given $\mu$ need not be
unique, and different choices of $\cD$ can lead to different sets of nodes. 

Besides the nodes of decompositions  \eqref{eq:noodes} of $\mu$ it is natural to talk about 
the {\em vertices} of $\mu$. They 
are $v\in\bC$ such that for any sufficiently small 
$\epsilon>0$ 
the restriction of the density of $\mu$ to the $\epsilon$-disk centered at 
$v$ is neither constant, nor there exists a line through $v$ dividing the disk into
two halves with different constant densities.
 
Obviously, the set of vertices of $\mu$ is a subset of the set of intersections of sides of the 
triangles in $\cD$. There exists a finite collection $\tilde\cD$ of triangles
with pairwise empty intersections of interiors, such that $\mu=\mu(\tilde\cD)$,
and nodes and vertices of $\mu$ coincide.
However, such a representation of $\mu$ need not be the most economic one, cf. 
e.g. Example~\ref{ex:gabr}.

Namely, in notation of  Example~\ref{ex:gabr}, consider
$\tilde\mu:=\mu_F-\mu_{F'}$.  Observe that
$\tilde\mu$ can be represented using only $6$ nodes, although the polygons
themselves have 12 vertices!  This also illustrates the non-uniqueness of
representation of $\tilde\mu$ in the form  \eqref{eq:noodes}. Indeed,
\[
2\tilde\mu=\sum_{0\leq j\leq 5} \mu_{\exp(\frac{j\pi I}{3})(\sqrt{3}+ I,\sqrt{3}- I, -2I)} - 
\sum_{0\leq j\leq 1}\mu_{(\sqrt{3}+(-1)^j I,\sqrt{3}-(-1)^j I,-\sqrt{3}+(-1)^j I)}.
\]



\medskip

Let an $S$ admit a pair of equipotential polygons. Taking the difference of their standard measures, one obtains a  polygonal  measure supported on the convex hull $\conv(S)$ of $S$ 
with density attaining only values $0,\pm 1$  and with all harmonic moments vanishing. 
Conversely, if one can find a polygonal  measure with all vanishing harmonic moments, 
and such that its density attains only values $0,\pm 1$, then one obtains a pair of equipotential  polygons
by taking the differences of $\conv(S)$ with 
the sets where the density attains value  $1$,  respectively $-1$.

If we weaken the condition that the density of a polygonal measure
attains only values $0, \pm 1$ then we arrive at the  setup of the present  paper. 
Given a {\em spanning} set $S$ (i.e. $S$  contains at least $3$ non-collinear
points), we introduce the linear spaces $\M^\bR(S)\subset \M^\bC(S)$ of
real-valued, respectively, 
complex-valued polygonal measures obtained as real, respectively, 
complex  linear spans of the standard measures of all triangles with
vertices in $S$. Obviously, $\M^\bC(S)=\bC\otimes \M^\bR(S)$.
 
We make a further step in the study of (non-)uniqueness in
logarithmic potential theory by considering the following question.  

\begin{problem}
Given a finite set $S\subset \bC$,   determine the linear subspace $\M^\bR_{null}(S)\subset  \M^\bR(S)$ of real-valued  polygonal measures (resp. of complex-valued  polygonal measures
$\M^\bC_{null}(S)\subset  \M^\bC(S)$) with all harmonic moments vanishing. 
\end{problem} 

The main technical tool we use is the {\it normalized  generating function $\Psi_\mu(u)$ for harmonic moments} of a measure $\mu$, defined by 
\begin{equation}\label{eq:Psi}
\Psi_\mu(u)=\sum_{j=0}^\infty \binom {j+2}{2} m_j(\mu)u^j.
\end{equation}
 Notice that $\Psi_\mu(u)$ is closely related to the Cauchy transform $\mathfrak C_\mu(z)$ at $\infty$. Namely, 
$$\Psi_\mu(u)=\frac{1}{2}\frac{d^2}{du^2}\left(\sum_{j=0}^\infty m_j(\mu) u^{j+2}\right).$$
 At the same time for a compactly supported measure $\mu$  and  sufficiently large  $|z|$, $z\mathfrak C_\mu(z)=\sum_{j=0}^\infty m_j(\mu)/ z^{j}$. Thus for $|u|$ sufficiently small, 
$$\Psi_\mu(u)=\frac{1}{2}\frac{d^2}{du^2}\left(u\mathfrak C_\mu\left(\frac{1}{u}\right)\right).$$
Similar multivariate generating  functions were recently considered in \cite{GPSS}. 
Important in our consideration are the following observations.

\begin{proposition}\label{pr:triangle} 
For measures $\mu$ with compact support,   
\begin{equation}\label{eq:PsiInt}
\Psi_\mu(u)=\sum_{j=0}^\infty \binom {j+2}{2} m_j(\mu)u^j=\int\frac{d\mu(z)}{(1-uz)^3}.
\end{equation}
The normalized generating function $\Psi_\Delta(u)$  
of (the standard measure of) the triangle $\Delta\subset \bC$ whose  vertices are located at $a,b,c$ is given by 
$$\Psi_\Delta(u)=\frac{\Area\, \Delta}{(1-au)(1-bu)(1-cu)}.$$
\end{proposition}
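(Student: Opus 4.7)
The proposition has two parts, and for both the only work is a small direct computation.

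For the first identity, the plan is to use the binomial series
\[
\frac{1}{(1-w)^3}=\sum_{j=0}^\infty \binom{j+2}{2}w^j,
\]
set $w=uz$, and integrate against $d\mu$. Because $\mu$ has compact support in some disk $|z|\leq R$, the series converges uniformly on the support whenever $|u|<1/R$, so term-by-term integration is legitimate and produces
\[
\int\frac{d\mu(z)}{(1-uz)^3}=\sum_{j=0}^\infty \binom{j+2}{2}u^j\!\int z^j d\mu(z)=\sum_{j=0}^\infty \binom{j+2}{2}m_j(\mu)u^j=\Psi_\mu(u).
\]
That handles the first displayed formula.

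For the triangle formula, the plan is to parametrize $\Delta$ by the standard $2$-simplex. Writing each $z\in\Delta$ uniquely as $z=ra+sb+tc$ with $r+s+t=1$, $r,s,t\geq 0$, one has the Jacobian $dx\,dy=2(\Area\Delta)\,ds\,dt$ with $r=1-s-t$. The crucial observation is the affine identity
\[
1-uz=r(1-ua)+s(1-ub)+t(1-uc),
\]
so writing $\alpha=1-ua$, $\beta=1-ub$, $\gamma=1-uc$,
\[
\int_\Delta \frac{dx\,dy}{(1-uz)^3}=2(\Area\Delta)\int_{\{r+s+t=1\}}\!\!\frac{ds\,dt}{(r\alpha+s\beta+t\gamma)^3}.
\]
For small $|u|$ each of $\alpha,\beta,\gamma$ stays close to $1$, so the integrand is bounded and the integral converges.

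The final step is the standard simplex integral
\[
\int_{\{r+s+t=1,\ r,s,t\geq 0\}}\frac{ds\,dt}{(r\alpha+s\beta+t\gamma)^3}=\frac{1}{2\,\alpha\beta\gamma},
\]
which is a special case of the Dirichlet-type formula $\int_{\Delta^{n-1}}(\sum x_i a_i)^{-n}d\sigma = \frac{1}{(n-1)!\prod a_i}$ and can be verified in our case by one explicit integration in $t$ (giving $\frac{1}{2(\gamma-\beta)}\bigl(\frac{1}{(r\alpha+s\beta)^2}-\frac{1}{(r\alpha+t\gamma)^2}\bigr)$ or similar) followed by integration in $s$, with the apparent $(\gamma-\beta)$ or $(\alpha-\beta)$ singularities canceling. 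Combining with the Jacobian factor $2\,\Area\Delta$ yields exactly $\Psi_\Delta(u)=\Area\Delta/[(1-ua)(1-ub)(1-uc)]$. I expect the only mild obstacle to be bookkeeping in the $2$-dimensional simplex integral when two of $\alpha,\beta,\gamma$ coincide; this can be handled either by continuity from the generic case or by symmetry, since both sides of the desired identity are analytic in $(a,b,c)$ for small $|u|$ and agree on a dense set.
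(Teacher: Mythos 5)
Your proof of the first identity is exactly the paper's: expand $(1-uz)^{-3}$ by the binomial series and integrate term by term over the compact support. For the triangle formula, however, you take a genuinely different route. The paper quotes Davis's triangle identity
$\frac{1}{2\Area\,\Delta}\int_\Delta f''(z)\,dx\,dy=\sum_k \frac{f(z_k)}{(z_k-z_i)(z_k-z_j)}$
and applies it to $f(z)=\frac{1}{2u^2(1-uz)}$, so that $f''(z)=(1-uz)^{-3}$ and the right-hand side recombines (by partial fractions) into $\Area\,\Delta/\prod(1-z_ku)$. You instead pass to barycentric coordinates, use the affine identity $1-uz=r\alpha+s\beta+t\gamma$, and invoke the Dirichlet/Feynman simplex integral $\int_{\Delta^2}(r\alpha+s\beta+t\gamma)^{-3}\,ds\,dt=\frac{1}{2\alpha\beta\gamma}$; together with the Jacobian $2\,\Area\,\Delta$ this gives the same answer, and your remark that the coincident-parameter cases follow by analytic continuation is the right way to dispose of the apparent singularities (one should also note that the real-positive-parameter Dirichlet formula extends to $\alpha,\beta,\gamma$ near $1$ by analyticity, which your smallness assumption on $|u|$ guarantees). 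The trade-off: the paper's argument is shorter but leans on an external reference, and Davis's formula is the natural tool if one wants to evaluate $\int_\Delta f''$ for general analytic $f$; your computation is self-contained, makes the Fantappi\`e-transform interpretation transparent, and generalizes verbatim to simplices in higher dimensions. Both are correct.
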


Note that the integral transform in \eqref{eq:PsiInt} appears to be 
a variant of {\em Fantappi\`{e} transformation}, cf. \cite{APS}. 

\begin{definition}
We say that a finite set $S=\{z_0,z_1,\ldots, z_n\}$ of points in $\bC$ is {\em non-degenerate} if no three of its points are collinear.
\end{definition}

\begin{proposition}\label{pr:basis} For any non-degenerate set $S=\{z_0,z_1,\ldots, z_n\},\, n\ge 2$ of points in $\bC$ and any fixed non-negative integer $j\le n$, the set of (standard measures of) all triangles  with a node at  $z_j$ is a basis of the spaces $\M^\bR(S)$ and $\M^\bC(S)$. In particular,  
$$\dim_\bR\M^\bR(S)=\dim_\bC\M^\bC(S)=\binom{n}{2}.$$  
\end{proposition}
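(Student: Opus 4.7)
The plan is to fix $j=0$ (by relabeling) and show that $\mathcal B:=\{\mu_{\Delta(z_0,z_p,z_q)}: 1\le p<q\le n\}$, of cardinality $\binom{n}{2}$, both spans $\M^\bR(S)$ and is linearly independent in it; the statement for $\M^\bC(S)$ and the dimension count then follow immediately. I would treat spanning and independence as separate steps, since spanning is a triangle-subdivision identity valid in any configuration, while the non-degeneracy of $S$ enters only in the independence argument.

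For spanning, I would work with the signed triangle measures $\nu(p,q,r):=\eps(p,q,r)\cdot\mu_{\Delta(p,q,r)}$, where $\eps(p,q,r)\in\{\pm 1\}$ is the sign of the oriented area of $(p,q,r)$, and establish the pointwise identity
\begin{equation*}
\nu(z_a,z_b,z_c)=\nu(z_0,z_b,z_c)-\nu(z_0,z_a,z_c)+\nu(z_0,z_a,z_b)
\end{equation*}
of densities on $\bR^2$. This is the planar shadow of the vanishing of the boundary of the $3$-simplex $(z_0,z_a,z_b,z_c)$ as a $2$-chain; it can be verified by a short case analysis on the position of $z_0$ relative to $\Delta(z_a,z_b,z_c)$, or equivalently by observing that the alternating sum has zero distributional boundary and vanishes at infinity. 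Reabsorbing the orientation signs on both sides realizes $\mu_{\Delta(z_a,z_b,z_c)}$ as a $\pm 1$-combination of three elements of $\mathcal B$, so $\mathcal B$ spans $\M^\bR(S)$.

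For linear independence, assume $\sum_{1\le p<q\le n}c_{pq}\mu_{\Delta(z_0,z_p,z_q)}=0$ as a measure, fix an index pair $(p,q)$, and consider the line $L_{pq}$ through $z_p$ and $z_q$. By the non-degeneracy of $S$, the line $L_{pq}$ contains no other node, so no other basis triangle $\Delta(z_0,z_r,z_s)$ with $\{r,s\}\ne\{p,q\}$ has an edge lying on $L_{pq}$; the densities of all such other triangles are therefore continuous across $L_{pq}$ except at the finitely many points where one of their edges crosses $L_{pq}$. Picking $w$ in the relative interior of the segment $[z_p,z_q]$ away from that finite bad set, the density jump of the total sum across $L_{pq}$ at $w$ equals $\pm c_{pq}$, forcing $c_{pq}=0$. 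The only substantive obstacle I foresee is the careful verification of the spanning identity, which, while classical, requires case analysis on the position of $z_0$ relative to $\Delta(z_a,z_b,z_c)$; the independence step itself is short and uses the non-degeneracy hypothesis transparently.
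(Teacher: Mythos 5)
Your proof is correct, and its overall strategy matches the paper's: spanning is obtained from the relations among the four triangles spanned by any four nodes, and independence from the fact that, by non-degeneracy, the edge $[z_p,z_q]$ belongs to exactly one basis triangle $\Delta(z_0,z_p,z_q)$. Your signed identity $\nu(z_a,z_b,z_c)=\nu(z_0,z_b,z_c)-\nu(z_0,z_a,z_c)+\nu(z_0,z_a,z_b)$ is precisely the paper's two case relations (Cases a) and b) of Fig.~2, quadrilateral versus triangular convex hull) packaged into a single orientation-free formula coming from $\partial(z_0,z_a,z_b,z_c)=0$; this is a small gain in uniformity, and your acknowledgement that it still needs a case check (or the distributional-boundary argument) is fair. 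The one genuine divergence is the independence step: the paper first proves, via Stokes' theorem, that a $2$-chain of triangles is vanishing if and only if its boundary $1$-chain is vanishing (its Lemma~1), and then argues that the coefficients of the edges $(z_i,z_j)$ must all vanish; you instead read off the coefficient $c_{pq}$ directly as the jump of the density of the total measure across the line $L_{pq}$ at a generic interior point of the segment $[z_p,z_q]$. Both arguments invoke non-degeneracy at the same spot (no third node on $L_{pq}$, hence no other basis triangle has an edge along that line), but yours is more elementary in that it dispenses with the chain formalism and the Stokes reduction entirely, at the cost of one short remark that an a.e.-vanishing piecewise-constant density has zero jumps. There is no gap; the argument is complete.
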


\medskip
We are interested in   linear subspaces $\M_{null}^\bR(S)\subset\M^\bR(S)$ (resp. $\M_{null}^\bC(S)\subset\M^\bC(S)$) of real-valued (resp. complex-valued)   measures having all  vanishing harmonic moments. 

\medskip
The main results of this paper are as follows. 

\begin{proposition}\label{pr:comp} For any non-degenerate set $S=\{z_0,z_1,\ldots, z_n\},\, n\ge 2$ of points in $\bC$, 
$$\dim_\bC\M_{null}^\bC=\binom{n-1}{2}.$$
\end{proposition}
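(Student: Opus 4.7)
The plan is to realize $\M_{null}^\bC(S)$ as the kernel of the linear map $\Psi\colon \M^\bC(S)\to \bC(u)$ sending $\mu\mapsto \Psi_\mu$. By Proposition~\ref{pr:triangle}, $\mu$ has all harmonic moments vanishing if and only if $\Psi_\mu\equiv 0$, so $\M_{null}^\bC(S)=\ker\Psi$. Since $\dim_\bC\M^\bC(S)=\binom{n}{2}$ by Proposition~\ref{pr:basis}, it suffices by rank--nullity to prove $\dim_\bC\mathrm{image}(\Psi)=n-1$.

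After translating $S$ so that $0\notin S$ (harmless, since translation transforms the sequence of harmonic moments by an upper-triangular, unit-diagonal, hence invertible, map), set $Q(u):=\prod_{k=0}^n(1-z_k u)$, a polynomial of degree $n+1$. By Proposition~\ref{pr:triangle}, for every triangle $\Delta$ with vertex set $\mathrm{vert}(\Delta)\subseteq S$,
\[
Q(u)\,\Psi_\Delta(u)=\Area(\Delta)\prod_{z_k\notin\mathrm{vert}(\Delta)}(1-z_k u)
\]
is a polynomial of degree $(n+1)-3=n-2$. Hence $\mathrm{image}(\Psi)\subseteq V:=\{P/Q:P\in\bC[u],\ \deg P\le n-2\}$, and $\dim_\bC V=n-1$. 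Identifying this codomain correctly is the conceptually crucial point: it is the cubic kernel $(1-uz)^{-3}$ in \eqref{eq:PsiInt} that forces the bound $\deg P\le n-2$, without which the dimension count would fail.

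The main step is to show that $\Psi$ surjects onto $V$. I plan to use the basis of $\M^\bC(S)$ from Proposition~\ref{pr:basis} and focus on the $n-1$ triangles $\Delta_{0,1,j}:=(z_0,z_1,z_j)$, $j=2,\ldots,n$. The numerator of $Q(u)\Psi_{\Delta_{0,1,j}}(u)$ is
\[
\Area(\Delta_{0,1,j})\prod_{k\in\{2,\ldots,n\}\setminus\{j\}}(1-z_k u).
\]
These areas are nonzero by the non-degeneracy of $S$, and the polynomial factors are, up to nonzero scalars, the Lagrange basis polynomials of degree $n-2$ at the $n-1$ distinct nodes $1/z_2,\ldots,1/z_n$: indeed, substituting $u=1/z_\ell$ produces a factor $(1-z_\ell/z_\ell)=0$ unless $\ell=j$. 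Hence these $n-1$ polynomials span the whole $(n-1)$-dimensional space of polynomials of degree $\le n-2$, so $\Psi$ is surjective, and rank--nullity yields $\dim_\bC\M_{null}^\bC(S)=\binom{n}{2}-(n-1)=\binom{n-1}{2}$.
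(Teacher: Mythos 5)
Your proposal is correct and takes essentially the same route as the paper: there, too, $\Psi_\mu$ is written as a rational function with numerator $P(u)$ of degree at most $n-2$, and the rank of the resulting linear map is shown to be $n-1$ by proving that the columns corresponding to the triangles $\Delta_{0,1,j}$ are independent via exactly your evaluation-at-$u=1/z_j$ Lagrange argument (packaged in the paper as the nonvanishing of the Vandermonde-type minor in Lemma~\ref{lm:minor}). The only cosmetic differences are your preliminary translation making $0\notin S$ and your phrasing in terms of rank--nullity for the map $\Psi$ rather than an explicit matrix.
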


\begin{example} For $n=3$ the space $\M_{null}^\bC(S)$ is spanned  by the complex-valued measure $\tilde \mu$  whose densities with respect to   the basis of  triangles $\Delta_{012},\Delta_{013},$  $\Delta_{023}$ are given by:  
$$\begin{cases}
d_{012}=(z_1-z_2)/|[012]|\\
d_{013}=(z_3-z_1)/|[013]|\\
d_{023}=(z_2-z_3)/|[023]|\\
\end{cases},$$
where $[i,j,k]=\det\begin{pmatrix}1&1&1\\x_i&x_j&x_k\\y_i&y_j&y_k\end{pmatrix}$ stands for twice the signed area of the triangle with nodes $z_i,z_j,z_k$  and $z_j=x_j+y_jI,$  $I$ being the imaginary unit. 
\end{example}

\begin{remark} 
For $S$ non-degenerate, the space $\M_{null}^\bC(S)$ projects isomorphically on the linear subspace of $\M^\bC(S)$ spanned by all triangles 
$\De_{0,i,j}$ where $2\le i<j\le n$. In other words, assigning arbitrarily complex-valued densities $d_{0,i,j},\; 2\le i <j\le n$ we can uniquely determine the densities $d_{0,1,j},\; j=2,\ldots , n$ to get a measure belonging to  $\M_{null}^\bC(S)$. 
\end{remark}

\begin{theorem}\label{th:more} For any non-degenerate  set $S=\{z_0,z_1,\ldots, z_n\},\, n\ge 2$ of points in $\bC$, 
 $$\dim_\bR \M_{null}^\bR(S)=\binom{n-2}{2}.$$

\end{theorem}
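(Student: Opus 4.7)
The plan is to use the parameterization of $\M_{null}^\bC(S)$ indicated in the remark following Proposition~\ref{pr:comp}: every $\mu\in\M_{null}^\bC(S)$ is uniquely specified by its free densities $d_{ik}:=d_{0,i,k}\in\bC$, $2\le i<k\le n$ (giving $\binom{n-1}{2}$ complex parameters), while the remaining densities $e_j:=d_{0,1,j}$, $j=2,\ldots,n$, are determined via a $\bC$-linear map $L\colon\bC^{\binom{n-1}{2}}\to\bC^{n-1}$. Restricting the free parameters to $d_{ik}\in\bR$, the measure $\mu$ lies in $\M_{null}^\bR(S)$ if and only if $\Im e_j=0$ for each $j$, i.e.\ $\Im L(d)=0$. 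The theorem thus reduces to showing that the real-linear map $\Im L\colon\bR^{\binom{n-1}{2}}\to\bR^{n-1}$ has rank exactly $n-2$, giving $\dim_\bR\M_{null}^\bR(S)=\binom{n-1}{2}-(n-2)=\binom{n-2}{2}$.

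To write $L$ explicitly, I would rewrite the defining condition $\Psi_\mu\equiv 0$, via Proposition~\ref{pr:triangle}, as the polynomial identity $\sum_{1\le i<j\le n}d_{ij}A_{ij}Q_{ij}(u)\equiv 0$ in $\bC[u]_{\le n-2}$, where $A_{ij}:=\mathrm{Area}(\Delta_{0,i,j})$ and $Q_{ij}(u):=\prod_{l\in\{1,\ldots,n\}\setminus\{i,j\}}(1-z_lu)$. The polynomials $Q_{1j}$ for $j=2,\ldots,n$ form a Lagrange-type basis of $\bC[u]_{\le n-2}$ (since $Q_{1j}(1/z_k)=0$ for $k\in\{2,\ldots,n\}\setminus\{j\}$ while $Q_{1j}(1/z_j)\neq 0$). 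Evaluating the identity at $u=1/z_j$ and applying the cancellation
\[
\frac{Q_{jk}(1/z_j)}{Q_{1j}(1/z_j)}=\frac{z_j-z_1}{z_j-z_k},
\]
which follows by comparing the two products, gives $e_j=\sum_{k\in\{2,\ldots,n\}\setminus\{j\}}d_{jk}C_{jk}$ with coefficients satisfying $A_{1j}C_{jk}=-A_{jk}(z_j-z_1)/(z_j-z_k)$.

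The rank computation then has two parts. First, for each $2\le i<k\le n$ a direct combination yields $A_{1i}C_{ik}+A_{1k}C_{ki}=-A_{ik}\in\bR$, so that $\sum_{j=2}^nA_{1j}\cdot(\text{row }j\text{ of }\Im L)$ vanishes; hence $\mathrm{rank}(\Im L)\le n-2$. Second, suppose $(\beta_j)_{j=2}^n\in\bR^{n-1}$ lies in the left kernel of $\Im L$. Substituting $\mu_j:=\beta_j/A_{1j}$ and $w_j:=z_j-z_1$ reduces the kernel conditions to $(\mu_i-\mu_k)\Im(w_i\bar w_k)=0$ for every pair $(i,k)$. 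By the non-degeneracy hypothesis the three points $z_1,z_i,z_k$ are not collinear, so $\Im(w_i\bar w_k)\neq 0$, forcing $\mu_i=\mu_k$ for all $i,k$; thus $(\beta_j)$ is a scalar multiple of $(A_{1j})_j$, the left kernel is one-dimensional, and $\mathrm{rank}(\Im L)=n-2$. The main obstacle lies in the ratio identity above and in recognising that after the reduction the only real structure left to exploit is precisely the non-collinearity of $z_1,z_i,z_k$ guaranteed by non-degeneracy.
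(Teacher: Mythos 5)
Your proof is correct, and it takes a genuinely different route from the paper's. The paper works directly with the full real $(2n-3)\times\binom{n}{2}$ matrix $\MM_n^\bR$ (real and imaginary parts of the rows of $\MM_n^\bC$) and proves full row rank by an explicit factorization of one maximal minor, $\det Min_n^\bR=C\,[123]\cdots[12n]\prod_{3\le i<j\le n}|z_i-z_j|^2$ (Lemma~\ref{lm:specminor}), treating $n=3$ separately. You instead quotient by the complex conditions first: your Lagrange-type basis $\{Q_{1j}\}$ simultaneously justifies the parameterization of $\M^\bC_{null}(S)$ by the free densities $d_{0ik}$, $2\le i<k\le n$ (so you do not really need to cite the unproved remark after Proposition~\ref{pr:comp} --- your evaluation argument proves it), and yields the closed form $A_{1j}C_{jk}=-A_{jk}(z_j-z_1)/(z_j-z_k)$ for the dependent densities. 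I checked the two key computations --- the relation $A_{1i}C_{ik}+A_{1k}C_{ki}=-A_{ik}\in\bR$, which shows the area vector $(A_{1j})_j$ lies in the left kernel of $\Im L$, and the reduction of the left-kernel condition to $(\mu_i-\mu_k)\Im(w_i\bar w_k)=0$, where non-collinearity of $z_1,z_i,z_k$ forces $\mu_i=\mu_k$ --- and both are right, so the rank is exactly $n-2$ and the dimension count follows. What each approach buys: the paper's determinant formula is reused later (it produces the explicit kernel vector of Theorem~\ref{th:5}), while your argument is more structural, exhibiting the unique linear relation among the $n-1$ conditions $\Im e_j=0$ that explains the drop from $n-1$ to $n-2$, and it handles $n=3$ uniformly. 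One cosmetic point you share with the paper's own Lemma~\ref{lm:minor}: evaluating at $u=1/z_j$ tacitly assumes $z_j\neq 0$; since vanishing of all harmonic moments is preserved under translation of $S$, one may normalize so that $0\notin\{z_1,\dots,z_n\}$, or argue with the reversed polynomials instead.
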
 

\begin{remark} 
For $S$ non-degenerate, the space $\M_{null}^\bR(S)$ projects isomorphically on the linear subspace of $\M^\bR(S)$ spanned by all triangles 
$\De_{0,i,j}$ where $3\le i<j\le n$. In other words, arbitrarily real-valued densities $d_{0,i,j},\; 3\le i <j\le n$, uniquely determine the densities $d_{0,1,j},\; j=2,\ldots, n$ and $d_{0,2,j},\;j=3,\ldots, n$ of a measure belonging to  $\M_{null}^\bR(S)$. 
\end{remark}

\begin{theorem}\label{th:5} For any non-degenerate $5$-tuple $S=\{z_0,z_1,z_2,z_3,z_4\},$ the space $\M_{null}^\bR(S)$  is spanned by the real measure $\tilde \mu$  with densities with respect to   the basis of  triangles $\Delta_{012},\Delta_{013},$ $\Delta_{014},$ $\Delta_{023},\Delta_{024},\Delta_{034}$ given by:  
\begin{equation}\label{eq:dens}\begin{cases}
d_{012}=||z_1-z_2||^2[134][234]/|[012]|\\
d_{013}=||z_1-z_3||^2[124][234]/|[013]|\\
d_{014}=||z_1-z_4||^2[123][234]/|[014]|\\
d_{023}=-||z_2-z_3||^2[124][134]/|[023]|\\
d_{024}=-||z_2-z_4||^2[134][123]/|[024]|\\
d_{034}=-||z_3-z_4||^2[123][124]/|[034]|\\
\end{cases}
\end{equation}
\end{theorem}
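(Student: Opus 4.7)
The plan is to invoke Theorem~\ref{th:more} to reduce to a one-dimensional problem and then exhibit the measure $\tilde\mu$ from \eqref{eq:dens} as an explicit nonzero null measure. For $|S|=5$ Theorem~\ref{th:more} gives $\dim_\bR \M_{null}^\bR(S) = \binom{2}{2} = 1$. The densities in \eqref{eq:dens} are manifestly real (each $\|z_i-z_j\|^2$ and each signed area $[i,j,k]$ is real) and are generically nonzero for non-degenerate $S$, so everything reduces to checking that this $\tilde\mu$ has all harmonic moments equal to $0$.

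To verify that I would use Proposition~\ref{pr:triangle}: all moments of $\tilde\mu$ vanish iff the rational function
\[
\Psi_{\tilde\mu}(u) \;=\; \sum_{1\le i<j\le 4} d_{0,i,j}\,\frac{\Area(\Delta_{0,i,j})}{(1-z_0 u)(1-z_i u)(1-z_j u)}
\]
is identically zero. Substituting \eqref{eq:dens} and using $\Area(\Delta_{0,i,j}) = |[0,i,j]|/2$ cancels the denominator $|[0,i,j]|$, reducing the claim to a rational identity of the form
\[
\sum_{1\le i<j\le 4} \pm \|z_i-z_j\|^2\,[\widehat i]\,[\widehat j]\,\frac{1}{(1-z_0 u)(1-z_i u)(1-z_j u)} \;\equiv\; 0,
\]
where $[\widehat k]$ abbreviates the bracket $[\{1,2,3,4\}\setminus\{k\}]$. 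The left-hand side is a rational function with simple poles only at $u=1/z_l$, $l=0,\ldots,4$, and vanishes at $u=\infty$, so the identity is equivalent to the vanishing of each of the five residues.

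For a pole $u=1/z_k$ with $k\in\{1,2,3,4\}$ only the three terms with $k\in\{i,j\}$ contribute; the computation uses the simplification $\|z_k - z_j\|^2/(z_k - z_j) = \bar z_k - \bar z_j$ and turns the residue condition into a linear combination of quantities $(\bar z_k - \bar z_j)\,[\widehat j]$. Each such residue identity then reduces, via the standard affine-dependency relations
\[
\sum_{k=1}^{4}(-1)^{k-1}[\widehat k] \;=\; 0, \qquad \sum_{k=1}^{4}(-1)^{k-1}\bar z_k\,[\widehat k] \;=\; 0
\]
for any four points $z_1,\ldots,z_4\in\bC\cong\bR^2$, to an arithmetic tautology. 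The residue at $u=1/z_0$ involves all six summands and is handled by partial-fraction expansion, reducing it to the four identities already established at the other poles.

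The principal technical obstacle is the bookkeeping of the signs in \eqref{eq:dens} together with the signs produced by the residues and by the determinantal brackets. There is no deep obstruction, but the computation is delicate and most naturally packaged as four ``Ptolemy-type'' relations among squared distances and signed areas of the four points $z_1,z_2,z_3,z_4$, each of which then follows from the two affine-dependency relations displayed above.
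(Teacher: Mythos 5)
Your overall strategy is legitimate and genuinely different from the paper's: the paper writes out the $5\times 6$ matrix $\MM^{\bR}_4$ explicitly, uses the fact that the kernel of a rank-$5$ matrix of that shape is spanned by the vector of signed maximal minors, and then identifies each minor with the claimed product of $\|z_i-z_j\|^2$ and two brackets by divisibility-plus-degree-count arguments. Your route --- dimension $1$ from Theorem~\ref{th:more}, then a direct check that $\Psi_{\tilde\mu}\equiv 0$ by noting that this rational function is $O(u^{-3})$ at infinity with only simple poles and killing each residue via the affine-dependency relations among $[234],[134],[124],[123]$ --- would, if completed, be a clean alternative.

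However, the step you defer as ``bookkeeping of the signs'' is the entire content of the verification, and it does not go through with the signs exactly as written in \eqref{eq:dens}. Consider the residue at $u=1/z_1$: only the terms with $m_{12},m_{13},m_{14}$ contribute, and since $m_{1j}=d_{01j}\Area\,\De_{01j}$ and $\|z_1-z_j\|^2/(z_1-z_j)=\bar z_1-\bar z_j$, that residue equals the nonzero factor $-z_1[234]/(2(z_1-z_0))$ times
\[
(\bar z_1-\bar z_2)[134]+(\bar z_1-\bar z_3)[124]+(\bar z_1-\bar z_4)[123].
\]
The two affine relations are $[234]-[134]+[124]-[123]=0$ and $\bar z_1[234]-\bar z_2[134]+\bar z_3[124]-\bar z_4[123]=0$; they annihilate the \emph{alternating} combination $(\bar z_1-\bar z_2)[134]-(\bar z_1-\bar z_3)[124]+(\bar z_1-\bar z_4)[123]$, whereas the all-plus combination displayed above reduces to $2(\bar z_1-\bar z_3)[124]$, which is generically nonzero. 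The same happens at the other poles: the residues vanish precisely for the sign pattern $m_{ij}\propto(-1)^{i+j}\,\|z_i-z_j\|^2\,[\cdots][\cdots]$, i.e.\ signs $(+,-,+,+,-,+)$ on $(m_{12},m_{13},m_{14},m_{23},m_{24},m_{34})$, not the printed $(+,+,+,-,-,-)$. (This corrected pattern is also the one you can read off from the paper's own worked example in Fig.~\ref{fg:T1}, whose triangle densities for $3\tilde\mu$ are $(3,-5,6,3,-5,3)$.) So your method, carried honestly to completion, would both prove the theorem and force a correction of the signs in \eqref{eq:dens}; as it stands, the proposal asserts the crucial ``arithmetic tautology'' without checking it, and for the statement as printed that identity is false.
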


\begin{example} 
For the $5$-tuple $\{0,2, 3+I, 1+3I, 2I\}$ the  
measure $3\tilde \mu$  is shown in Fig.~\ref{fg:T1} below.  (In this case $3\tilde\mu$ has integer densities which are easier to show \TeX nically.) 
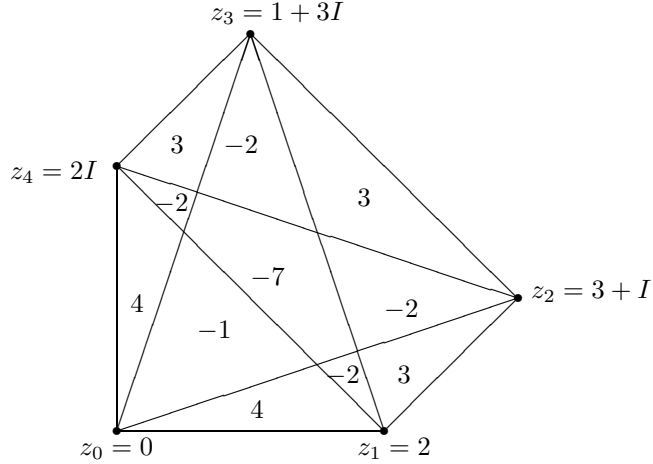
\begin{figure}[h]
\begin{center}
\begin{picture}(360,180)(0,0)
\put(120,20){\line(1,0){100}}
\put(220,20){\line(1,1){50}}
\put(120,20){\line(0,1){100}}
\put(120,120){\line(1,1){50}}
\put(170,170){\line(1,-1){100}}
\put(120,20){\line(3,1){150}}
\put(120,20){\line(1,3){50}}
\put(120,120){\line(1,-1){100}}
\put(120,120){\line(3,-1){150}}
\put(220,20){\line(-1,3){50}}

\put(106,11){$z_0=0$}
\put(210,11){$z_1=2$}
\put(275,70){$z_2=3+I$}
\put(155,175){$z_3=1+3I$}
\put(80,115){$z_4=2I$}

\put(170,25){$4$}
\put(198,38){$-2$}
\put(225,38){$3$}
\put(220,63){$-2$}
\put(170,75){$-7$}
\put(160,125){$-2$}
\put(210,105){$3$}
\put(150,55){$-1$}
\put(125,65){$4$}
\put(140,125){$3$}
\put(134,103){$-2$}

\put(120,20){\circle*{3}}
\put(220,20){\circle*{3}}
\put(270,70){\circle*{3}}
\put(170,170){\circle*{3}}
\put(120,120){\circle*{3}}

\end{picture}
\end{center}
\caption{Measure $3\tilde \mu$ spanning $\M_{null}^\bR(0,2,3+I,1+3I,2I)$.  \label{fg:T1}}
\end{figure}
\end{example}

\begin{remark}
Suppose that the densities  of a polygonal measure $\mu\in
\M^\bR_{null}(S)$  with respect to  the basic triangles containing a fixed node
(say $z_0$) are known. It is still desirable to find the densities in all its 
chambers, for instance in view of  the classical Problem~\ref{prob:one}.  
Here by a
{\it chamber} we mean a connected component of $\conv(S)\setminus Arr(S)$,
$Arr(S)$ being the union of all lines connecting pairs of points in $S$.
(Integers in Fig.~\ref{fg:T1} show the densities in the chambers they are
placed in.) Each chamber is contained in a number of basic triangles and the
density of a given chamber equals the sum of the densities of all basic
triangles containing it. Containment of chambers in  triangles (and more
generally in simplices in $\bR^d$) can be coded by an appropriate incidence
matrix whose rows correspond to  simplices  and columns correspond to
chambers. If a simplex contains a chamber then the corresponding entry 
of the incidence matrix equals
$1$, otherwise the entry equals $0$. 
Examples of incidence matrices are given in the proof of Theorem~\ref{th:5-tuple} below. 

This incidence matrix of chambers and simplices
in $\bR^d$ was for the first time studied in \cite {AlGelZel} and later in
\cite{Al1,Al2}.  It has rather delicate properties and already  the
number of chambers is a complicated function of the initial non-degenerate set
$S$. In particular, this number  can change  if we deform $S$ within the class
of non-degenerate sets. This observation partially explains why  results of
the present paper do not automatically solve Problem~\ref{prob:one}. 
\end{remark}

\begin{remark} Notice that if $S=\{z_0,\ldots, z_n\}$ consists of complex numbers having only  rational real and imaginary parts then one can choose a basis of $\M_{null}^\bR(S)$ consisting of polygonal measures with integer densities. 
\end{remark}

Using Example~\ref{ex:gabr} together with  Theorem~\ref{th:5} we can prove the
following result related to the classical Problem~\ref{prob:one}. 

\begin{theorem}\label{th:5-tuple}
For each $n\geq 6$ there exists $S$, with $|S|=n$,  admitting
a pair of equipotential polygons. No such $S$ exists if $|S|\leq 5$.
\end{theorem}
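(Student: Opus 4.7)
The statement has two parts, and I would handle them separately.

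\emph{Positive part ($n \ge 6$).} The six-element configuration $S = T := \{\pm\sqrt{3}\pm I,\pm 2I\}$ of Example~\ref{ex:gabr} already suffices for $n = 6$: the explicit identity displayed after that example writes $2\tilde\mu = 2(\mu_F - \mu_{F'})$ as a real combination of eight triangular standard measures, each with all three vertices in $T$ (a direct check, since $T$ is invariant under rotation by $\pi/3$), so $\tilde\mu \in \M^\bR(T)$; its harmonic moments vanish because $F$ and $F'$ are equipotential, and its density takes only the values $0,\pm 1$ by construction. For $n > 6$, enlarge to $S' = T \cup S_0$ with $|S_0| = n-6$ chosen to keep $S'$ non-degenerate; since $\M^\bR(T) \subseteq \M^\bR(S')$, the same $\tilde\mu$ lies in $\M^\bR_{null}(S')$ with $\{0,\pm 1\}$-valued density, so $S'$ supports the same pair of equipotential polygons.

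\emph{Negative part ($|S|\le 5$).} When $|S|\le 4$, Theorem~\ref{th:more} gives $\dim_\bR \M^\bR_{null}(S) = \binom{|S|-3}{2} = 0$, so only the zero measure is available, which yields no pair of polygons. The substantive case is $|S|=5$, where Theorem~\ref{th:5} identifies $\M^\bR_{null}(S)$ as the real line spanned by the explicit measure $\tilde\mu$ with triangle densities~\eqref{eq:dens}. Any $\{0,\pm 1\}$-valued candidate is then $\lambda \tilde\mu$ for some $\lambda \in \bR$, which forces all nonzero chamber densities of $\tilde\mu$ to share one common absolute value. The plan is to write the chamber-density vector as $J^T d$, where $d = (d_{012},\ldots,d_{034})^T$ is the triangle-density vector from~\eqref{eq:dens} and $J$ is the incidence matrix between the six basic triangles $\Delta_{0ij}$ and the chambers of $\mathrm{Arr}(S)\cap \conv(S)$. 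The combinatorial type of $S$ determines $J$; a non-degenerate 5-tuple admits only three such types (convex pentagon, convex quadrilateral with one interior point, and triangle with two interior points), so one lists $J$ in each case and checks, using the explicit algebraic form of~\eqref{eq:dens}, that the nonzero entries of $J^T d$ cannot all agree in absolute value.

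The main obstacle lies in this last step: the impossibility must be verified for \emph{every} non-degenerate 5-tuple, including highly symmetric ones such as the regular pentagon, not merely for a generic one. The crux is to extract, from the ratios of squared distances and signed areas in~\eqref{eq:dens}, the algebraic impossibility of equality of absolute values across the chamber densities produced by the incidence matrix in each of the three combinatorial cases; the constraints are rigid enough that no non-degenerate configuration can satisfy them all, but establishing this uniformly is precisely what the incidence-matrix bookkeeping is designed to achieve.
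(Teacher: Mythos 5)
Your treatment of the hard case $|S|=5$ starts the same way as the paper's: reduce via Theorem~\ref{th:5} to the single measure $\tilde\mu$ with densities \eqref{eq:dens}, split into the three combinatorial types of non-degenerate $5$-tuples, and push the triangle densities through the triangle--chamber incidence matrix. But the mechanism you propose for the contradiction --- that the nonzero chamber densities ``cannot all agree in absolute value'' by algebraic bookkeeping with the incidence matrix and \eqref{eq:dens} --- is not where the obstruction lies, and the step you yourself flag as ``the main obstacle'' is exactly the missing idea. In the convex-pentagon case the incidence constraints together with the signs read off from \eqref{eq:dens} \emph{do} admit a consistent solution: the three chambers lying in exactly one basic triangle force $d_{012}=d_{023}=d_{034}=1$, the remaining chambers then force $d_{024}=d_{013}=-1$ and $d_{014}=1$, and every chamber density lands in $\{0,\pm1\}$ (five outer chambers of density $1$, the inner pentagon $K$ of density $-1$, the rest $0$). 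No contradiction is visible at the combinatorial level. The paper's contradiction is metric: since any $\mu\in\M^\bR_{null}(S)$ has $m_0(\mu)=0$, the total signed area must vanish, i.e. $\Ar{K}=\Ar{A}+\Ar{C}+\Ar{E}+\Ar{G}+\Ar{I}$, and an elementary secant--area inequality (Lemma~\ref{lm:area}) shows each outer chamber strictly dominates one of five triangles which together cover $K$ up to a smaller inner pentagon; iterating this covering exhausts $K$ and yields the strict inequality $\Ar{K}<\Ar{A}+\Ar{C}+\Ar{E}+\Ar{G}+\Ar{I}$. The quadrilateral and triangle cases are closed by one or two direct applications of the same lemma. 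Without this zeroth-moment-plus-area argument your proof does not close, and no amount of incidence-matrix bookkeeping alone will produce the contradiction.

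Two smaller points. For $n>6$ you enlarge $S$ by points the measure never uses; the paper instead glues a convex polygon $Q$ with vertices $P_1,\dots,P_q,\sqrt3\pm I$ onto both $F$ and $F'$, so the added points become genuine vertices of the equipotential pair --- this matters if ``$S$ admits a pair of equipotential polygons'' is read, as in Problem~\ref{prob:one}, as requiring the polygons to actually involve all of $S$. Your $n=6$ step and the reduction of $|S|\le 4$ to Theorem~\ref{th:more} agree with the paper.
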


The essential part of the proof of Theorem~\ref{th:5-tuple} is to deal with the case $|S|=5$. 

\medskip
Our final result concerns a natural cone spanned by the standard measures of triangles with nodes in $S$. Namely, for an arbitrary non-degenerate set $S=\{z_0,z_1,\ldots, z_n\}$ denote by $\mathfrak K(S)\subset \M^\bR(S)$  the $\binom n 2$-dimensional cone   obtained by taking non-negative linear combinations of the  standard measures of all triangles with nodes in $S$. (Recall that $ \M^\bR(S)$ is the linear span of these measures.) 

\begin{theorem}\label{th:cone}
 Extreme rays of  $\mathfrak K(S)$ are spanned by (the standard measures)  of triangles which do not contain any point of $S$ different from its own nodes.  In particular, if $S$ is a convex configuration, (i.e. each $z_j$ belongs to the convex hull of $S$) then every triangle with nodes in $S$ spans an extreme ray of  $\mathfrak K(S)$.
 \end{theorem}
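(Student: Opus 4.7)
\medskip

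\textbf{Proof proposal for Theorem~\ref{th:cone}.} The plan is to characterise the extreme rays by a simple support/containment argument, treating the two implications separately.

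\emph{Non-extremality.} First I would show that if a triangle $\Delta$ with nodes $z_a,z_b,z_c\in S$ contains a point $z_k\in S\setminus\{z_a,z_b,z_c\}$, then $\mu_\Delta$ is a non-trivial positive combination of standard measures of triangles with nodes in $S$, hence does not span an extreme ray. Indeed, if $z_k$ lies in the interior of $\Delta$, then the three triangles $\Delta_{abk},\Delta_{ack},\Delta_{bck}$ have pairwise disjoint interiors and cover $\Delta$, so $\mu_\Delta=\mu_{\Delta_{abk}}+\mu_{\Delta_{ack}}+\mu_{\Delta_{bck}}$. If instead $z_k$ lies on a side of $\Delta$, say $[z_a,z_b]$, the analogous decomposition $\mu_\Delta=\mu_{\Delta_{akc}}+\mu_{\Delta_{kbc}}$ does the job. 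In either case $\mu_\Delta$ is a non-trivial sum of two or three other generators of $\mathfrak K(S)$, so its ray is not extreme.

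\emph{Extremality.} Conversely, assume $\Delta$ contains no point of $S$ different from its own nodes, and suppose $\mu_\Delta=\sum_{i} \lambda_i \mu_{\Delta_i}$ with $\lambda_i>0$ and each $\Delta_i$ a (non-degenerate) triangle with nodes in $S$. Comparing densities pointwise, the non-negative function $\sum_i \lambda_i \chi_{\Delta_i}$ must equal $\chi_\Delta$, and in particular must vanish outside $\Delta$. Since $\lambda_i>0$ and $\chi_{\Delta_i}\ge 0$, this forces $\Delta_i\subset \Delta$ for every $i$. But then the three nodes of $\Delta_i$ belong to $S\cap \Delta=\{z_a,z_b,z_c\}$, and a non-degenerate triangle with vertices drawn from this set of three points is necessarily $\Delta$ itself. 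Hence every $\Delta_i$ equals $\Delta$, the decomposition is trivial, and $\mu_\Delta$ spans an extreme ray of $\mathfrak K(S)$.

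\emph{The convex case.} The ``in particular'' statement is then immediate: if $S$ is in convex position, then by definition no $z_k\in S$ lies in the convex hull of $S\setminus\{z_k\}$; in particular $z_k$ does not lie in any triangle whose nodes are the other three points of $S$. Hence every triangle with nodes in $S$ avoids the remaining points of $S$, and by the extremality direction above every such triangle spans an extreme ray.

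The argument is essentially combinatorial once one observes that positivity of coefficients in $\mathfrak K(S)$ forces pointwise containment of supports. The only mildly delicate step is checking the boundary case where $z_k$ lies on a side rather than in the interior of $\Delta$; this is handled by the two-triangle rather than three-triangle subdivision above. I do not expect any serious obstacle, and in particular no use of Propositions~\ref{pr:triangle} or~\ref{pr:basis} or of the harmonic-moment machinery is needed.
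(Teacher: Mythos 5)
Your argument is correct and essentially identical to the paper's own proof: non-extremality via subdividing $\Delta$ at an interior point of $S$ into three smaller triangles (the relation of Case b) of Fig.~\ref{fg:T2}), and extremality via the observation that positivity of the coefficients forces every triangle in the decomposition to be contained in $\Delta$, hence to coincide with it. The only cosmetic difference is your treatment of a node lying on a side of $\Delta$, which the paper dismisses as vacuous since $S$ is assumed non-degenerate (no three points collinear); handling it anyway does no harm.
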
 
 
We finish the introduction with a conjectural   description of all faces of
$\mathfrak K(S)$. We say that a pair of  triangles with vertices in $S$  {\it
forms a flip} if they have a common side and their convex hull is a $4$-gon.
With any pair of  triangles forming a flip we associate their {\em flipped
pair} obtained by removing the opposite diagonal from their convex hull, see
Case a) Fig.~\ref{fg:T2} below. (On this figure the pairs of triangles $(\De_{013},
\De_{123})$  and $(\De_{012}, \De_{023})$ form a flip and each pair is the
flipped one to the other pair.)  
 
 \begin{conjecture}\label{th:conefaces}
 A collection $Col$ of triangles having no internal vertices  spans a face of $\mathfrak K(S)$ if and only if  for each pair of triangles  from $Col$ forming a flip its flipped pair of triangles is also contained in $Col$. 
  \end{conjecture}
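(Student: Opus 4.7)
The plan is to handle necessity via the flip identity and sufficiency through a connectivity argument on the polytope of nonnegative representations of a given measure.

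For necessity, suppose $Col$ spans a face $F$ of $\mathfrak{K}(S)$ and let $\De_1,\De_2\in Col$ form a flip with convex hull the quadrilateral $Q$ and flipped pair $\De_1',\De_2'$. The identity $\mu_{\De_1}+\mu_{\De_2}=\mu_Q=\mu_{\De_1'}+\mu_{\De_2'}$ holds because both triangulations partition $Q$ into two triangles meeting only along a diagonal of planar measure zero. Since $\mu_{\De_1}+\mu_{\De_2}\in F$ while $\mu_{\De_1'},\mu_{\De_2'}\in\mathfrak{K}(S)$, the defining face property forces $\mu_{\De_1'},\mu_{\De_2'}\in F$. Because $\De_1,\De_2$ have no internal vertices, neither does $Q$, so $\De_1',\De_2'$ are empty and hence extreme rays by Theorem~\ref{th:cone}; therefore they belong to $Col$.

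For sufficiency, assume $Col$ is flip-closed and put $F:=\mathrm{cone}(Col)$. To verify the face property of $F$ it suffices to show that any $\nu\in F$ and any nonnegative representation $\nu=\sum_{\De\in\mathcal{E}}c_\De\mu_\De$ (with $\mathcal{E}$ the set of empty triangles with vertices in $S$) satisfy $c_\De=0$ whenever $\De\notin Col$. The key step is the following connectivity lemma: for any $\nu\in\mathfrak{K}(S)$ the bounded polytope $P_\nu:=\{c\ge 0 : \sum_\De c_\De\mu_\De=\nu\}$ has the property that any two of its vertices are connected by a sequence of edges, each corresponding to a single flip identity $\mu_{\De_1}+\mu_{\De_2}=\mu_{\De_1'}+\mu_{\De_2'}$. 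Granting this, the symmetric character of flip-closure (the flip of a flip is the original pair) lets it propagate along such a path: a vertex of $P_\nu$ supported in $Col$---which exists whenever $\nu\in F$ by decomposing the given $F$-representation into vertices---forces every other vertex, hence every point, of $P_\nu$ into $Col$.

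The principal obstacle is the connectivity lemma. For $S$ in convex position it follows from the classical connectedness of the flip graph on triangulations of a convex polygon (the $1$-skeleton of Stasheff's associahedron), applied to the sub-polygons supporting $\nu$. For general non-degenerate $S$, vertices of $P_\nu$ need not correspond to triangulations---they may involve empty triangles straddling internal nodes of $S$---so the combinatorics of $P_\nu$ is more intricate. A local analysis around each internal node is required to realise the edge moves of $P_\nu$ as genuine flips, and one must verify that a flip path between two vertices can be chosen inside $P_\nu$ so that all intermediate representations remain nonnegative. Ruling out any pair of vertices of $P_\nu$ that cannot be joined by such a flip path is the technical heart of the conjecture.
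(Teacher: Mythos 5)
First, be aware that the statement you are proving is presented in the paper as a \emph{conjecture}: the authors give no proof, remarking only that necessity is ``quite obvious'' and that sufficiency ``might follow from the results of \cite{AlGelZel}''. Your necessity argument is correct and is presumably the one the authors have in mind: the flip identity $\mu_{\De_1}+\mu_{\De_2}=\mu_{\De_1'}+\mu_{\De_2'}$, the defining property of a face, and Theorem~\ref{th:cone} to recognize $\mu_{\De_1'},\mu_{\De_2'}$ as extreme rays of $\mathfrak K(S)$ lying in the face, hence as generators that must occur in $Col$. (One small point worth making explicit: ``no internal vertices'' must be read for \emph{closed} triangles, so that no point of $S$ lies on the common diagonal of the flip; otherwise the flipped triangles need not be empty.)

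For sufficiency there is a genuine gap, which you have correctly located but not closed. Your reduction is sound: it suffices to show that every nonnegative representation by empty triangles of a measure $\nu\in\mathrm{cone}(Col)$ is supported in $Col$, and flip-closure does propagate along an edge of the fiber polytope $P_\nu$ \emph{provided} that edge is a single flip move. But the ``connectivity lemma'' carries the entire weight of the conjecture, and as stated it is not merely unproven but not obviously well-posed. Remark~\ref{cor:lindep} says the kernel of $c\mapsto\sum_\De c_\De\mu_\De$ is \emph{generated} by the flip relations; this does not imply that the edges of the fiber $P_\nu$ point in flip directions (an edge direction could be a nontrivial combination of flip vectors), nor that the vertices of $P_\nu$ are joined by flip paths along which all intermediate points stay nonnegative. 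Even for $S$ in convex position, $P_\nu$ is not the polytope of triangulations of \cite{DLHSS} --- its points are weighted representations of a fixed measure $\nu$, not triangulations --- so the classical flip-connectivity of the associahedron does not transfer directly, and for general non-degenerate $S$ the vertex structure of $P_\nu$ is exactly the ``delicate'' chamber/simplex combinatorics of \cite{AlGelZel} that the paper flags as difficult. In short, your proposal reformulates the conjecture as an equally open connectivity statement; that is a useful reduction, but it is not a proof.
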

  
  The necessity of the stated condition is quite obvious and its sufficiency might follow from the results of \cite{AlGelZel}.

\subsection*{Acknowledgements.}   
B.S. is grateful to the Division of Mathematical Sciences of Nanyang Technological
University  for  hospitality in April 2012 when this project was carried out.
D.V.P. is supported by Singapore MOE Tier 2 Grant MOE2011-T2-1-090 (ARC 19/11).
The authors thank Sinai Robins for helpful discussions.
The authors are grateful to the anonymous referee for extremely useful comments
on the initial version of this paper.

\section{Proofs}\label{s2}

\begin{proof}[Proof of Proposition~\ref{pr:triangle}]
First, we prove \eqref{eq:PsiInt}.  Indeed, 
\begin{multline*}
\int \frac{d\mu(z)}{(1-uz)^3}=\sum_{k\geq  0} u^k \int \binom{k+2}{2}  z^k\,d\mu(z)
=\sum_{k\geq  0} u^k \binom{k+2}{2} m_k(\mu) =\Psi_\mu(u),
\end{multline*}
as required.
By \cite[(1)]{Dav}, for any $f(z)$ analytic in the closure of $\De$, we have 
\[
\frac{1}{2\Area\, \De}\int_\De f''(z) dx dy=\sum_{k=1,\ j\neq i\in\{1,2,3\}\setminus\{k\}}^k
\frac{f(z_k)}{(z_k-z_i)(z_k-z_j)}.
\]
Applying the latter identity and \eqref{eq:PsiInt}
 to $f(z)=\frac{1}{2u^2}\frac{1}{1-uz}$, we get the claimed formula.
\end{proof}

To prove Proposition~\ref{pr:basis} we need to recall some basic notions. First we present a description of  all  linear dependences among  the standard measures of all triangles with vertices in a non-degenerate set $S$. Namely, any $4$-tuple of points (say, $\{z_0,z_1,z_2,z_3\}$) in $S$ has $4$ triangles with vertices at these points. To study linear dependences between these $4$ triangles, one has to distinguish between two cases.  Consider
the convex hull of $\{z_0,z_1,z_2,z_3\}$, which is either a quadrangle or a triangle, see Fig.~\ref{fg:T2}.   
Obviously, in  Case a) 
  we have (up to  permutation of the vertices) the equality
$\mu_{\Delta_{013}}+\mu_{\Delta_{123}}=\mu_{\Delta_{023}}+\mu_{\Delta_{012}}$.  
Analogously, in Case b) 
we have   (up to  permutation of the vertices)  the relation $\mu_{\Delta_{012}}=\mu_{\Delta_{013}}+\mu_{\Delta_{123}}+\mu_{\Delta_{023}}$. 

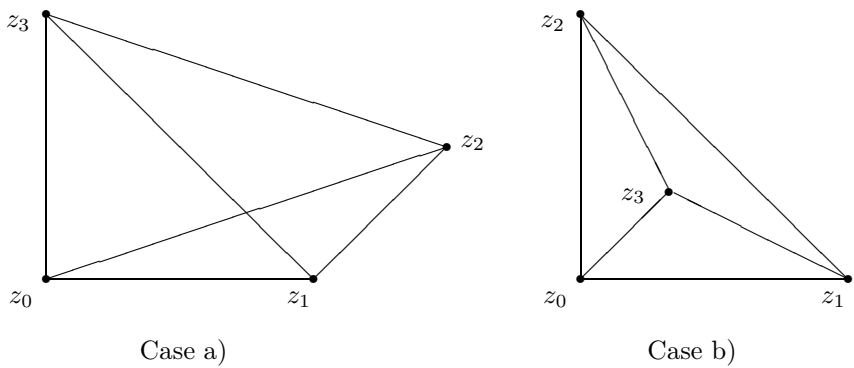
\begin{figure}[h]
\begin{center}
\begin{picture}(360,140)(0,0)
\put(40,40){\line(1,0){100}}
\put(140,40){\line(1,1){50}}
\put(40,40){\line(0,1){100}}
\put(40,40){\line(3,1){150}}
\put(40,140){\line(1,-1){100}}
\put(40,140){\line(3,-1){150}}

\put(26,31){$z_0$}
\put(130,31){$z_1$}
\put(195,90){$z_2$}
\put(25,135){$z_3$}

\put(75,10){Case a)}

\put(40,40){\circle*{3}}
\put(140,40){\circle*{3}}
\put(190,90){\circle*{3}}
\put(40,140){\circle*{3}}

\put(240,40){\line(1,0){100}}
\put(340,40){\line(-2,1){65}}
\put(240,40){\line(0,1){100}}
\put(240,40){\line(1,1){33}}
\put(240,140){\line(1,-1){100}}
\put(240,140){\line(1,-2){33}}

\put(226,31){$z_0$}
\put(330,31){$z_1$}
\put(255,70){$z_3$}
\put(225,135){$z_2$}

\put(240,40){\circle*{3}}
\put(340,40){\circle*{3}}
\put(273,73){\circle*{3}}
\put(240,140){\circle*{3}}

\put(265,10){Case b)}

\end{picture}
\end{center}
\caption{Linear dependence of $4$ triangles spanned by $4$ points.  \label{fg:T2}}
\end{figure}

To complete the proof of Proposition~\ref{pr:basis} we need to show that if $S$
is non-degenerate then the set of (the standard measures of) all triangles
containing a given vertex $z_j\in S$ spans $\M^\bR(S)$
and that this set is
linearly independent. The former immediately follows from the discussion preceding 
Fig.~\ref{fg:T2}. It remains to show the latter. We need more
notions.

\begin{definition} 
By a {\it $2$-chain} $\C^{(2)}$ we mean a formal linear combination 
\begin{equation}\label{2-chain}
\C^{(2)}=\al_1\De_1+\al_2\De_2+\ldots +\al_s\De_s
\end{equation}
 of triangles $\De_1,\ldots \De_s$ in $\bC$  with real or complex coefficients where each triangle is equipped with the standard orientation induced from $\bC$. 
\end{definition}

By using the standard pairing 
$$\langle fdxdy, \C^{(2)} \rangle =\int_{\C^{(2)}} fdxdy=\sum_{j=1}^s\al_j\int_{\De_j}fdxdy,$$
one sees that 
a $2$-chain \eqref{2-chain} defines a linear functional on the space $\Omega^{(2)}$ of smooth $2$-forms  on $\bC$. 

\begin{definition}
Analogously, by a {\it $1$-chain} $\C^{(1)}$ we mean a formal linear combination 
\begin{equation}\label{1-chain}
\C^{(1)}=\be_1I_1+\be_2I_2+\ldots +\be_tI_t
\end{equation}
 of oriented finite intervals  $I_1,\ldots I_s$ in $\bC$  with real or complex coefficients.  
\end{definition}

Again, by using the standard pairing 
$$\langle w, \C^{(1)} \rangle =\int_{\C^{(1)}} w=\sum_{j=1}^t\be_j\int_{I_j}w,$$
where $w$ is an arbitrary smooth $1$-form, 
one sees that 
a $1$-chain \eqref{1-chain} defines a linear functional on the space $\Omega^{(1)}$ of smooth $1$-forms  on $\bC$.  

\begin{definition}
For a given  triangle $\De$ with  vertices $a,b,c$ where  triple $(a,b,c)$ is
counterclockwise oriented  we  define its {\it boundary} $\prt \De$ as  the sum
of  three oriented intervals $[ab]+[bc]+[ca]$. As usual, we extend 
the boundary operator $\prt$ by linearity to the linear space of all $2$-chains.
\end{definition} 

\begin{definition} 
 A $2$-chain (resp. a $1$-chain) is called {\it vanishing} if it defines the zero linear functional on $\Omega^{(2)}$ (resp. $\Omega^{(1)}$).  
 \end{definition} 
 
\begin{lemma}\label{lm:depend}  A $2$-chain $\C^{(2)}$ is vanishing if and only if its boundary $\prt \C^{(2)}$ is a vanishing $1$-chain. 
\end{lemma}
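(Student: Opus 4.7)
The plan is to prove both implications from the same basic identity, which is nothing but Stokes' theorem (equivalently, Green's theorem in the plane) applied termwise. For any smooth $1$-form $\omega$ on $\bC$ and any $2$-chain $\C^{(2)}=\sum_i\al_i\De_i$, each triangle $\De_i$ is a compact planar region bounded by a piecewise-smooth, coherently oriented curve, so
$$\langle\omega,\prt\C^{(2)}\rangle \;=\; \sum_i\al_i\int_{\prt\De_i}\omega \;=\; \sum_i\al_i\int_{\De_i}d\omega \;=\; \langle d\omega,\C^{(2)}\rangle.$$
Every step afterwards is a one-line consequence of this identity.

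For the forward direction, I would simply observe that if $\C^{(2)}$ is vanishing then, since $d\omega$ is a smooth $2$-form for any smooth $1$-form $\omega$, the right-hand side above is zero. Therefore $\langle\omega,\prt\C^{(2)}\rangle=0$ for all smooth $1$-forms $\omega$, which is exactly the assertion that $\prt\C^{(2)}$ is vanishing.

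For the reverse direction, the plan is to invoke the Poincar\'e lemma on $\bR^2$ to exhibit every smooth $2$-form as a global exterior derivative. Concretely, given $\eta=f\,dx\,dy$, set $\omega:=F(x,y)\,dy$ with $F(x,y):=\int_0^x f(t,y)\,dt$; then $\omega$ is smooth on all of $\bC$ and $d\omega=\eta$. Plugging this $\omega$ into the displayed identity and using the hypothesis that $\prt\C^{(2)}$ is vanishing yields $\langle\eta,\C^{(2)}\rangle=\langle\omega,\prt\C^{(2)}\rangle=0$, which gives the conclusion.

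I do not foresee any real obstacle: the single technical point is the justification of Stokes' theorem on a triangle, a manifold with corners, but this is precisely the classical Green's theorem in the plane for piecewise $C^1$ boundaries, and the coefficients $\al_i$ play no role beyond linearity. The orientation convention built into $\prt\De=[ab]+[bc]+[ca]$ for a counterclockwise triple $(a,b,c)$ matches the one required for Stokes' theorem, so no sign bookkeeping is needed beyond what the definition already provides.
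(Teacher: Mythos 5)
Your proof is correct and follows essentially the same route as the paper's: Green's/Stokes' theorem applied termwise to the chain, combined with the observation that every smooth $2$-form on the plane is globally exact via a primitive taken along horizontal (or vertical) lines. If anything, your explicit separation of the two implications is slightly cleaner than the paper's compressed statement ``If the l.h.s.\ vanishes \dots\ then $\prt\C^{(2)}$ should vanish and vice versa.''
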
 

\begin{proof} Stokes theorem says that $\int_{\prt
\De}w=\int_\De d w $, where $w\in \Omega^{(1)}$, $\De$ is an arbitrary triangle,
$\prt \De$ is its boundary and $dw$ is the differential of $w$. (Recall that
if $w=F(x,y)dx+G(x,y)dy$ then $dw=(G'_x-F'_y)dxdy$.)  Observe that any $2$-form
$f(x,y)dxdy$ can be represented  as $dw_x$ where $w_x=F(x,y)dx$ and $F(x,y)$ is
the primitive function of $-f(x,y)$ along  vertical lines. Analogously,
$f(x,y)dxdy$ equals $dw_y$ where $w_y=G(x,y)dy$ and $G(x,y)$ is the primitive
function of $f(x,y)$ along  horizontal lines. Thus
$$\int_{\C^{(2)}}fdxdy=\int_{\prt{\C^{(2)}}} w_x=\int_{\prt{\C^{(2)}}}w_y.$$ If
the l.h.s. vanishes for all $fdxdy$ then ${\prt{\C^{(2)}}}$ should vanish and
vice versa.  \end{proof}

\begin{proof}[Proof of Proposition~\ref{pr:basis}] 
We need to show that for any non-degenerate $S$ the
standard measures of all triangles containing $z_0$ are linearly independent.
Indeed, by Lemma~\ref{lm:depend} a $2$-chain $\C^{(2)}$ of triangles vanishes
if and only its  boundary chain $\prt \C^{(2)}$ vanishes,  But if $S$ is
non-degenerate then each triangle $\De_{0,i,j}$ has its unique edge $(z_i,z_j)$ in
the boundary and no chain of the form $\be_{i,j}(z_i,z_j)$ with non-trivial
$\be_{i,j}$ can be vanishing.  Therefore the standard measures of triangles
$\De_{0,i,j}$  form a basis in $\M^\bC(S)$ and $\M^\bR(S)$.   
\end{proof}

\begin{remark}\label{cor:lindep}
Proposition~\ref{pr:basis} has an interesting and immediate corollary, 
that the linear dependences among the standard measures of all 
triangles with vertices in $S$ are generated by the linear dependences shown on Fig.~\ref{fg:T2} 
which come  from all possible $4$-tuples of vertices in $S$.

It is a special case of  \cite[Theorem~1]{AlGelZel}. 
(Unfortunately, it seems that a  proof of this important statement is missing in the available literature.) 
J.A.~De~Loera informed us that it can be derived from results in \cite{DLHSS}
(e.g. in the plane case one can use Lawson Theorem),  or \cite{DLRS}. 
\end{remark}

%

\begin{proof}[Proof of Proposition~\ref{pr:comp}] 
The case $n=2$ is trivial, so we assume $n\geq 3$.  
Given a non-degenerate $S=\{z_0, z_1,\ldots , z_n\}$,  consider the complex-valued  measure $\mu$ obtained by assigning (complex) densities $d_{0ij},\;  1\le i<j\le n$ to  triangles $\De_{0ij}$. Set $m_{i,j}=d_{0ij}\Area\,\Delta_{0ij}$. Then the normalized generating function $\Psi_\mu(u)$  for harmonic moments of $\mu$ is given by 
 \begin{equation}\label{eq:main2}
\begin{split}
 \Psi_\mu(u)=\sum_{1\le i <j\le n} d_{0ij}\Psi_{\De_{0ij}}(u)=
\sum_{1\le i <j\le n}\frac{m_{ij}}{(1-z_0u)(1-z_iu)(1-z_ju)}\\
=\frac{1}{1-z_0u}\frac{P(u)}{\prod_{i=1}^n(1-z_iu)},
\end{split}
 \end{equation} 
where $P(u)$ is a polynomial of degree at most $n-2$. Its coefficients at $1$,
$u$, $u^2$,\ldots, $u^{n-2}$ are the consecutive entries of the vector
$\MM_n^\bC\cdot \mathbf{m}_n$, where  $$\mathbf{m}_n=(m_{12},
m_{13},\ldots m_{n-1,n})^{\top}$$ with $m_{i,j}$'s ordered lexicographically,
and $\MM_n^\bC$  is the $(n-1)\times \binom {n}{2}$-matrix with 
columns corresponding to $m_{i,j}$. Such a column contains consecutive
elementary symmetric functions of the $(n-2)$-tuple $(-z_1,-z_2,\ldots -\hat z_i,$
$\ldots , -\hat z_j, \ldots, -z_n)$, where $\hat z_i$ and $\hat z_j$ stands for
the omission of these points. 
  
 \begin{example}
For $n=4$ the coefficients at $(1,u,u^2)$ of the numerator $P(u)$ of  \eqref{eq:main2} 
are the consecutive entries of the vector  $\MM^\bC_4\cdot \mathbf{m}_4$  where 
\begin{multline*}
\mathbf{m}_4=(m_{12},m_{13},m_{14},m_{23},m_{24},m_{34})^{\top}\qquad \text{and}\\ 
\MM^\bC_4=\begin{pmatrix} 1&1&1&1&1&1\\
-z_3-z_4&-z_2-z_4&-z_2-z_3&-z_1-z_4&-z_1-z_3&-z_1-z_2 \\
z_3z_4&z_2z_4&z_2z_3&z_1z_4&z_1z_3&z_1z_2\end{pmatrix}.
\end{multline*}
In other words, 
\[
P(u)=\sum_{\substack{1\leq i<j\leq 4\\ k<\ell,\ \{ij\}\cap\{k\ell\}=\emptyset}} 
(m_{ij}-(z_i+z_j)m_{k\ell}u+ z_i z_j m_{k\ell} u^2).
\]
\end{example}
Consider the maximal minor $Min_n^\bC$ of $\MM^\bC_n$ formed by the columns 
corresponding to $m_{12},\ldots, m_{1,n}$, i.e. the first $n-1$ columns of $\MM^\bC_n$.

 \begin{lemma}\label{lm:minor}  $det_n=(-1)^{n-1}\det(Min_n^\bC)=(-1)^{n-1}\prod_{2\le i<j\le n}(z_i-z_j).$ 

 \end{lemma}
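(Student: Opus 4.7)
The key observation is that the column of $Min_n^\bC$ indexed by $m_{1k}$ (for $k=2,\ldots,n$) contains, in rows $i=0,1,\ldots,n-2$, precisely the coefficients $[u^i]p_k(u)$ of the degree-$(n-2)$ polynomial
$$
p_k(u)\;:=\;\prod_{j\in\{2,\ldots,n\}\setminus\{k\}}(1-z_j u).
$$
The plan is to recover $\det(Min_n^\bC)$ by a Vandermonde evaluation that diagonalises $Min_n^\bC$, exploiting the crucial fact that $p_k(1/z_\ell)=0$ whenever $\ell\neq k$.

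First assume $z_2,\ldots,z_n$ are all nonzero. Introduce the $(n-1)\times(n-1)$ Vandermonde matrix $V$ with entries $V_{i,\ell}=z_\ell^{-i}$ for $0\le i\le n-2$ and $2\le\ell\le n$. A direct computation shows that $V^{\top}\!\cdot Min_n^\bC$ has $(\ell,k)$-entry equal to $\sum_i z_\ell^{-i}[u^i]p_k(u)=p_k(1/z_\ell)$. Since $(1-z_\ell u)$ is a factor of $p_k$ for $\ell\neq k$, this product is diagonal with $k$-th diagonal entry
$$
p_k(1/z_k)\;=\;z_k^{-(n-2)}\prod_{\substack{j\in\{2,\ldots,n\}\\ j\neq k}}(z_k-z_j).
$$

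Taking determinants on both sides and using the classical Vandermonde formula
$$
\det V\;=\;\prod_{2\le\ell<\ell'\le n}\!\!(z_{\ell'}^{-1}-z_\ell^{-1})\;=\;\frac{\prod_{2\le\ell<\ell'\le n}(z_\ell-z_{\ell'})}{\prod_{k=2}^n z_k^{\,n-2}},
$$
the factors $z_k^{-(n-2)}$ appearing in the diagonal of $V^{\top}\!\cdot Min_n^\bC$ cancel cleanly against those hidden in $\det V$. Rearranging the ordered product $\prod_{k\neq j}(z_k-z_j)$ into unordered pairs introduces a sign $(-1)^{\binom{n-1}{2}}$, after which one copy of $\prod_{2\le i<j\le n}(z_i-z_j)$ cancels against $\det V$. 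This yields
$$
\det(Min_n^\bC)\;=\;(-1)^{\binom{n-1}{2}}\prod_{2\le i<j\le n}(z_i-z_j),
$$
and multiplication by the prefactor $(-1)^{n-1}$ delivers the claimed value of $\det_n$ (with $n-1+\binom{n-1}{2}=\binom{n}{2}$ collecting the overall sign).

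The assumption that each $z_\ell$ is nonzero is harmless: both sides are polynomials in $z_2,\ldots,z_n$, so agreement on the Zariski-dense open set $\{\prod z_\ell\neq 0\}$ forces agreement everywhere. The principal technical obstacle is the careful bookkeeping of the signs produced when the ordered numerator $\prod_{k\neq j}(z_k-z_j)$ is rewritten in terms of unordered pairs; the nonvanishing of $\det_n$ on any non-degenerate set $S$ is then an immediate consequence of the Vandermonde factor.
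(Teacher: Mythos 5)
Your proof is correct, and it takes a genuinely different (and in one respect sharper) route than the paper. Both arguments rest on the same key identification --- the column of $Min_n^\bC$ indexed by $m_{1k}$ is the coefficient vector of $p_k(u)=\prod_{j\in\{2,\ldots,n\}\setminus\{k\}}(1-z_ju)$, and $p_k(1/z_\ell)=0$ for $\ell\neq k$ --- but you use this evaluation to \emph{diagonalise} $Min_n^\bC$ outright against the Vandermonde matrix $V$ and read off the determinant in closed form, whereas the paper only uses the evaluation to show that the columns are linearly independent for distinct $z_i$, and then pins the determinant down indirectly by a degree count ($\binom{n-1}{2}$), divisibility by each $z_i-z_j$, and a normalization at the special points $z_2=0,\ldots,z_n=n-2$. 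Your version buys an exact formula with an explicit sign in one computation; the paper's version avoids the bookkeeping with ordered versus unordered products. Your reduction to the Zariski-dense set $\prod z_\ell\neq 0$ is a legitimate way to handle the degenerate evaluations, and all intermediate identities ($p_k(1/z_k)=z_k^{-(n-2)}\prod_{j\neq k}(z_k-z_j)$, the value of $\det V$, the sign $(-1)^{\binom{n-1}{2}}$ from pairing up $\prod_{k\neq j}(z_k-z_j)$) check out.

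One caveat concerns your last sentence. Your computed value
$\det(Min_n^\bC)=(-1)^{\binom{n-1}{2}}\prod_{2\le i<j\le n}(z_i-z_j)$
is the \emph{correct} one (direct computation at $n=3$ gives $z_3-z_2$ and at $n=4$ gives $-(z_2-z_3)(z_2-z_4)(z_3-z_4)$, both matching your sign), but it does \emph{not} reproduce the normalization $(-1)^{n-1}$ asserted in the lemma for all $n$: the two signs agree only when $\binom{n-1}{2}$ is even, i.e.\ $n\equiv 1,2 \pmod 4$. So the claim that multiplying by $(-1)^{n-1}$ ``delivers the claimed value'' glosses over a genuine discrepancy with the statement as printed --- which appears to be a sign slip in the paper rather than an error on your part. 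The discrepancy is immaterial to everything downstream, since the lemma is invoked only to conclude that $\det(Min_n^\bC)\neq 0$ for pairwise distinct points, hence that $\MM_n^\bC$ has rank $n-1$; but you should state your (correct) sign rather than assert agreement with the printed one.
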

\begin{proof}
 Indeed, the degree of $\det(Min_n^\bC)$ as a polynomial in $z_2,\dots, z_n$ equals $\binom{n-1}{2}$. We need to show that it vanishes if and only if 
$z_i=z_j$. The 'if' part is obvious since the column corresponding to $m_{1,i}$ will 
coincide with the column corresponding to $m_{1,j}$.  To see the remaining part, argue by contradiction
and assume that $(\alpha_{12},\dots,\alpha_{1n})$ is a nontrivial linear dependence among the columns of
$Min_n^\bC$. The $1k$-th column consists of the coefficients of the polynomial
$g_{1k}(u)=\frac{\prod_{j=1}^n (1-z_ju)}{1-z_ku}$, and our linear dependence is a linear
dependence among such polynomials. Evaluate these at $\frac{1}{z_j}$ and 
note that $g_{1k}(\frac{1}{z_j})$ vanish whenever $k\neq j$. Thus $\alpha_{1j}=0$, a contradiction. Thus $\det(Min_n^\bC)$ is divisible by $\prod_{2\le i<j\le n}(z_i-z_j)$.  Substituting $z_2=0, z_3=1,\dots, z_{n}=n-2$ we can check that the normalizing factor equals $(-1)^{n-1}$.  
\end{proof}

\begin{rema+}   By using Cramer's rule, 
it is not difficult to give an explicit formula for the inverse $(Min_n^\bC)^{-1}$.  
\end{rema+} 
 
From Lemma~\ref{lm:minor} we know that for any,
not necessarily non-degenerate, $S=\{z_0,z_1,\ldots, z_n\}$ 
with pairwise distinct points  the rank of $\MM_n^\bC$ equals $n-1$.  
Thus the kernel of $\MM^\bC_n$, which by definition coincides with $\M^\bC_{null}(S)$,
has dimension $\binom {n}{2}-(n-1)=\binom{n-1}{2}$.
\end{proof}

\begin{proof}[Proof of Theorem~\ref{th:more}]  The space $\M^\bR_{null}(S)\subset \M^\bC_{null}(S)$ is the maximal by inclusion real subspace of the complex kernel. In other words, it can be interpreted as  the real kernel  of the real matrix $\MM_n^\bR$  obtained by taking the real and imaginary parts of all rows of $\MM_n^\bC$. 

The case $n=2$ is trivial. The case $n=3$ can be dealt with by explicitly
computing the kernel of $\M_3^\bC$ and seeing that it does not contain real
vectors if $S$ is non-degenerate. Thus we assume $n\geq 4$.  Since the first
row of $\MM_n^\bC$ equals $(1,1,\ldots, 1)$  the matrix $\MM_n^\bR$ has size
$(2n-3)\binom{n}{2}$, see \eqref{eq:5}. Ordering $m_{ij}$'s lexicographically,
consider the maximal minor $Min_n^\bR$ of $\MM_n^\bR$ formed by the columns
corresponding to  $(2n-3)$ variables $m_{12},m_{13},\ldots , m_{1n}, m_{23},
m_{24},\ldots , m_{2n}$, i.e. the first $(2n-3)$ columns of $\MM_n^\bR$. 

\begin{lemma}\label{lm:specminor} 
$\det Min_n^\bR=C[123][124]\cdots [12n] \prod_{3\le i<j\le n}|z_i-z_j|^2,\quad 0\neq C\in\bR.$  
\end{lemma}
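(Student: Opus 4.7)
The plan is to treat $\det Min_n^\bR$ as a polynomial in the real variables $x_j,y_j$ (equivalently in the independent complex variables $z_j,\bar z_j$), match its total degree with that of the claimed right-hand side, and exhibit each claimed factor as an irreducible divisor in the UFD $\bR[x_1,y_1,\dots,x_n,y_n]$ by producing a vanishing on its zero locus. The constant $C$ will then be fixed by one explicit non-degenerate evaluation.

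First I would establish the degree bound. Row $\ell$ of $\MM_n^\bC$ consists of elementary symmetric functions of degree $\ell$ in $\{-z_k\}$, hence is homogeneous of degree $\ell$ in the $x_k,y_k$. Taking real and imaginary parts, $\MM_n^\bR$ has one row of degree $0$ and, for each $\ell=1,\dots,n-2$, two rows of degree $\ell$. The Leibniz formula then gives $\deg \det Min_n^\bR\le 2\sum_{\ell=1}^{n-2}\ell=(n-1)(n-2)$, which matches the total degree $2(n-2)+2\binom{n-2}{2}=(n-1)(n-2)$ of the claimed product.

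Next I would extract the irreducible factors. For divisibility by $|z_i-z_j|^2$ with $3\le i<j\le n$: setting $z_j=z_i$ makes $\prod_{\ell\neq 1,i}(1-z_\ell u)$ and $\prod_{\ell\neq 1,j}(1-z_\ell u)$ coincide, so columns $m_{1i}$ and $m_{1j}$ of $\MM_n^\bR$ become equal and $\det Min_n^\bR$ vanishes. Since $|z_i-z_j|^2=(x_i-x_j)^2+(y_i-y_j)^2$ is the $\bC/\bR$-norm form of $z_i-z_j$, it is irreducible in $\bR[x_1,y_1,\dots,x_n,y_n]$; distinct pairs $\{i,j\}$ give pairwise coprime irreducibles, each of which must then divide $\det Min_n^\bR$. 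For divisibility by $[12k]$ with $3\le k\le n$: the condition $[12k]=0$ is equivalent to $z_k=\alpha z_1+\beta z_2$ for some $\alpha,\beta\in\bR$ with $\alpha+\beta=1$, whence $(1-z_k u)=\alpha(1-z_1 u)+\beta(1-z_2 u)$. Multiplying through by $H_k(u):=\prod_{\ell\neq 1,2,k}(1-z_\ell u)$ yields $\prod_{\ell\neq 1,2}(1-z_\ell u)=\beta\prod_{\ell\neq 1,k}(1-z_\ell u)+\alpha\prod_{\ell\neq 2,k}(1-z_\ell u)$, i.e.\ a linear dependence among the polynomials whose coefficient vectors are columns $m_{12},m_{1k},m_{2k}$ of the $z$-block of $\MM_n^\bR$. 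Reality of $\alpha,\beta$ forces $\bar z_k=\alpha\bar z_1+\beta\bar z_2$ and the same dependence for the $\bar z$-block, while $1=\alpha+\beta$ takes care of the all-ones row. Hence those three columns of $\MM_n^\bR$ are linearly dependent and $\det Min_n^\bR=0$ on the hypersurface $[12k]=0$. Irreducibility of $[12k]$ over $\bR$ is immediate: viewed in $(x_k,y_k)$ it has degree one with coefficients $-(y_2-y_1)$ and $(x_2-x_1)$, which are coprime in $\bR[x_1,y_1,x_2,y_2]$.

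The $(n-2)+\binom{n-2}{2}$ factors $[12k]$ and $|z_i-z_j|^2$ are pairwise distinct irreducibles, hence pairwise coprime, and their product has total degree $(n-1)(n-2)$. Combined with the degree bound this yields $\det Min_n^\bR=C\cdot[123][124]\cdots[12n]\prod_{3\le i<j\le n}|z_i-z_j|^2$ for some $C\in\bR$. To verify $C\neq 0$ I would evaluate both sides at a single convenient non-degenerate configuration; for instance, for $n=4$ and $(z_1,z_2,z_3,z_4)=(0,1,i,1+i)$ a direct $5\times 5$ determinant calculation gives $\det Min_4^\bR=-1$ while the right-hand side equals $C$, forcing $C=-1$. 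For general $n$ an analogous direct evaluation works, or one can specialize $z_n$ to reduce to the case $n-1$. The main obstacle is the linear-dependence argument for the factor $[12k]$: it crucially uses the reality of $\alpha,\beta$, which ensures the \emph{same} combination works simultaneously for the $z$-block, the $\bar z$-block, and the all-ones row; without this matching one would only obtain a weaker complex statement.
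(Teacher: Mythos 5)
Your degree count, your treatment of the factors $[12k]$ (where you correctly insist that the \emph{real} coefficients $\alpha,\beta$ give one and the same dependence simultaneously in the $z$-block, the $\bar z$-block and the all-ones row), and your plan for pinning down $C$ are all sound; for the non-vanishing the paper argues differently (a hypothetical real dependence among the columns is evaluated at $u=1/z_k$ and forces $[12k]=0$), which works uniformly in $n$, whereas your "analogous direct evaluation" of a $(2n-3)\times(2n-3)$ determinant is left unverified. The genuine gap, however, is in the divisibility by $|z_i-z_j|^2$. You argue: $\det Min_n^\bR$ vanishes on the real zero set of the irreducible polynomial $q_{ij}=(x_i-x_j)^2+(y_i-y_j)^2$, hence $q_{ij}$ divides it. That inference is invalid here, because the real zero set of $q_{ij}$ is $\{x_i=x_j,\ y_i=y_j\}$, of real codimension \emph{two}; an irreducible real polynomial generates the full vanishing ideal of its real zero set only when that set is an honest hypersurface (as it is for $[12k]$, which changes sign, but not for a norm form). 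Compare: $x$ vanishes on the real zero set of the irreducible polynomial $x^2+y^2$, yet $x^2+y^2\nmid x$. Exhibiting one pair of coincident columns on $\{z_i=z_j\}$ only places the determinant in the ideal $(x_i-x_j,\ y_i-y_j)$, which is far from divisibility by a degree-two factor.

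The repair is to complexify: treat $z_k$ and $\bar z_k$ as independent variables, factor $|z_i-z_j|^2=(z_i-z_j)(\bar z_i-\bar z_j)$ into two coprime linear forms, and prove divisibility by each. The catch is that when only $z_i=z_j$ (with $\bar z_i\neq\bar z_j$) the columns $m_{1i}$ and $m_{1j}$ do \emph{not} coincide: they agree in the rows built from $e_\ell(-z)$ but differ in those built from $e_\ell(-\bar z)$. Instead, note that after an invertible row transformation $Min_n^\bR$ contains an $(n-1)$-row holomorphic block (the all-ones row together with the rows $e_\ell(-z_\bullet)$, $\ell=1,\dots,n-2$), and that every column of $Min_n^\bR$ is indexed by a pair $(a,b)$ with $a\in\{1,2\}$; since $i,j\geq 3$, each polynomial $g_{ab}(u)=\prod_{\ell\neq a,b}(1-z_\ell u)$ retains at least one of the factors $(1-z_iu)$, $(1-z_ju)$, so upon setting $z_i=z_j$ all these polynomials acquire the common factor $(1-z_iu)$ and span a space of dimension at most $n-2$. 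The $n-1$ rows of the holomorphic block are then linearly dependent, the determinant vanishes identically on the complex hyperplane $z_i=z_j$, and $(z_i-z_j)\mid\det Min_n^\bR$; the conjugate factor follows symmetrically. This is what the paper's observation that \emph{both} $g_{1i}=g_{1j}$ and $g_{2i}=g_{2j}$, together with its explicit factorization $(z_i-z_j)(\bar z_i-\bar z_j)$, is pointing at. Without this step the claimed factorization, and hence your final degree comparison, does not go through.
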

\begin{proof}
 We begin by showing that $\Theta:=\det Min_n^\bR$ is divisible by $[12k]$ for any $3\leq k\leq n$.
As $[12k]$ is an irreducible quadratic polynomial in $x_1, x_2, x_k$ and $y_1, y_2, y_k$, it suffices to show that vanishing of $[12k]$
implies vanishing of $\Theta$.
Vanishing of $[12k]$ is equivalent to existence of $a\in\bR$ satisfying $z_k=az_1+(1-a)z_2$.
The latter implies that $Min_n^\bR$   has linearly dependent columns ${12}$, $1k$, and $2k$. Indeed, they consist, respectively, 
of the coefficients of
{\small
\begin{align*}
g_{12}(u)=&
(1-az_1 u-(1-a)z_2 u)\times &(1-z_3u)\dots (1-z_{k-1}u)(1-z_{k+1}u)\dots (1-z_n u)\\
g_{1k}(u)=&
(1-z_2 u)\times &(1-z_3u)\dots (1-z_{k-1}u)(1-z_{k+1}u)\dots (1-z_n u)\\
g_{2k}(u)=&
(1-z_1 u)\times &(1-z_3u)\dots (1-z_{k-1}u)(1-z_{k+1}u)\dots (1-z_n u)
\end{align*}
}
which are linearly dependent:  $g_{12}(u)=(a-1)g_{1k}(u)-ag_{2k}(u)$.

To show that $\Theta$ is divisible by $|z_i-z_j|^2=(z_i-z_j)(\overline{z}_i-\overline{z}_j)$ 
for any $3\le i<j\le n$, 
observe that $z_i=z_j$ implies $g_{ki}(u)=g_{kj}(u)$ for $k=1,2$.

It remains to see that $\Theta$ is not identically $0$. Arguing by contradiction, 
let $(\alpha_{12},\alpha_{13},\dots,\alpha_{1n},\alpha_{23},\dots,\alpha_{2n})$ be the coefficients
of a nontrivial real linear dependence among the columns of $Min_n^\bR$. The latter columns correspond to
the coefficients of $g_{ij}(u)$. Evaluating these at $u=\frac{1}{z_k}$, for $3\leq k\leq n$, makes all of them
but $g_{1k}$ and $g_{2k}$ vanish. Thus 
\[
\alpha_{1k}g_{1k}({z_k^{-1}})+ \alpha_{2k}g_{2k}({z_k^{-1}})=0,\quad\text{implying }
\alpha_{1k}=-\alpha_{2k}\frac{z_k-z_2}{z_k-z_1}\quad\text{and }
 \frac{z_k-z_2}{z_k-z_1}\in\bR.
\]
A direct computation shows that the rightmost relation is equivalent to $[12k]=0$, a contradiction.
\end{proof} 
Lemma~\ref{lm:specminor} implies that for any non-degenerate $S$ the matrix $\MM_n^\bR$ has rank equal to  $2n-3$. Therefore, $\dim \M_{null}^\bR(S)=\binom 
{n}{2}-(2n-3)=\binom{n-2}{2}.$
\end{proof}

\begin{proof}[Proof of Theorem~\ref{th:5}]  For $S=\{0,z_1,z_2,z_3,z_4\}$ the space $\M_{null}^\bR(S)$ is given by the system 
\[
\MM^{\bR}_4\cdot  \mathbf{m}_4=0,\qquad\text{
where $\mathbf{m}_4=(m_{12},m_{13},m_{14},m_{23},m_{24},m_{34})^\top$ and}
\] 
\begin{equation}\label{eq:5}
\MM^{\bR}_4=\begingroup
\everymath{\scriptstyle}
\begin{pmatrix}
1 & 1 & 1 & 1 & 1 & 1 \\
-x_{3} - x_{4} & -x_{2} - x_{4} &- x_{2} - x_{3} &- x_{1} - x_{4} &- x_{1} - x_{3} &- x_{1} - x_{2} \\
-y_{3} - y_{4} & -y_{2} - y_{4} &- y_{2} - y_{3} & -y_{1} - y_{4} & -y_{1} - y_{3} &- y_{1} - y_{2} \\
x_{3} x_{4} - y_{3} y_{4} & x_{2} x_{4} - y_{2} y_{4} & x_{2} x_{3} - y_{2} y_{3} & x_{1} x_{4} - y_{1} y_{4} & x_{1} x_{3} - y_{1} y_{3} & x_{1} x_{2} - y_{1} y_{2} \\
x_{3} y_{4} + x_{4} y_{3} & x_{2} y_{4} + x_{4} y_{2} & x_{2} y_{3} + x_{3} y_{2} & x_{1} y_{4} + x_{4} y_{1} & x_{1} y_{3} + x_{3} y_{1} & x_{1} y_{2} + x_{2} y_{1}
\end{pmatrix}.
\endgroup
\end{equation}
Recall that a $k\times (k+1)$-matrix $T$ of rank $k$ has right kernel spanned by the vector
$(T^{(1)},\dots,T^{(k+1)})$, where $T^{(j)}$ is the minor of $T$ with $j$th column removed multiplied by $(-1)^{j}$.
Thus \eqref{eq:5} has a unique (up to a scaling) solution of the form: 
   $$\begin{cases}
m_{12}=&||z_1-z_2||^2[134][234]\\
m_{13}=&||z_1-z_3||^2[124][234]\\
m_{14}=&||z_1-z_4||^2[123][234]\\
m_{23}=-&||z_2-z_3||^2[124][134]\\
m_{24}=-&||z_2-z_4||^2[134][123]\\
m_{34}=-&||z_3-z_4||^2[123][124]\\
\end{cases}.$$

It is easy to prove this. We give a sketch here for $m_{12}$. Note that
$m_{12}$ equals to the determinant of the matrix $A^{(12)}$ obtained from
$\MM^\bR_4$ by removing the 1st column.  Then,
$\det A^{(12)}$ is divisible by $||z_1-z_2||^2$, as the rank of $A^{(12)}$ drops when $z_1=z_2$, and as
$||z_1-z_2||^2=(z_1-z_2)(\overline{z}_1-\overline{z}_2)$ 
is the product of two irreducible 
polynomials with complex coefficients.

Similarly, $\det A^{(12)}$ is divisible by $[234]$ (and a very similar argument applies to $[134]$).  
To see this, note that, as
$[234]$ is irreducible, it suffices to show that its vanishing implies
vanishing of $\det A^{(12)}$. To this end, assume that $z_4=az_2+(1-a)z_3$, 
with $a\in\RR$, and
make this substitution into $A^{(12)}$. The last 3 columns of  $A^{(12)}$ become 
{\small
\[ \begin{pmatrix} 
1 & 1 & 1 \\
-a x_{2} + a x_{3} - x_{1} - x_{3} & -x_{1}-x_{3} & -x_{1}-x_{2} \\
-a y_{2} + a y_{3} - y_{1} - y_{3} & -y_{1}-y_{3} & -y_{1}-y_{2} \\
a x_{1} x_{2} - a x_{1} x_{3} - a y_{1} y_{2} + a y_{1} y_{3} + x_{1} x_{3} - y_{1} y_{3} & x_{1} x_{3} - y_{1} y_{3} & x_{1} x_{2} - y_{1} y_{2} \\
a x_{1} y_{2} - a x_{1} y_{3} + a x_{2} y_{1} - a x_{3} y_{1} + x_{1} y_{3} + x_{3} y_{1} & x_{1} y_{3} + x_{3} y_{1} & x_{1} y_{2} + x_{2} y_{1}
\end{pmatrix}.
\]
}
They are linearly dependent with coefficients $(1,a-1,-a)$.
\end{proof}

\begin{proof}[Proof of Theorem~\ref{th:5-tuple}]
To prove the first part, we recall that Example~\ref{ex:gabr} settles the case
$|S|=6$. To settle the case $|S|=6+q$, we modify the latter
Example. Add $q$ points $P_1,\dots,P_q$ 
outside $\conv(T)$, so that 
so that  $P_1,\dots,P_q$ and $\sqrt{3}\pm I$ are in the convex
position, and $Q$ is the resulting convex $q+2$.  Then $F\cup Q$ and $F'\cup Q$
are equipotential $(6+q)$-gons, by additivity of the measure.

To prove the second part, we have consider the cases $|S|=3,4,5$, one by one.
Cases $|S|=3,4$ follow from  Theorem~\ref{th:more}. 

It remains to deal with the only non-trivial case $|S|=5$.  We  have to
consider the incidence matrices between the chambers and the basic simplices
for all possible non-degenerate $5$-tuples of points $S$. One can easily see
that for non-degenerate $5$-tuples there are (up to permutation of the
vertices) only $3$  different cases to consider  depending on the shape of
$\conv(S)$ which can be a $5$-gon, a $4$-gon, or a triangle. The corresponding
incidence matrices $Inc_1, Inc_2, Inc_3$ are given below 
using the labeling presented in
Fig.~\ref{fg:T3} and ~\ref{fg:T4}  for these cases.   (Greek
letters in Fig.~\ref{fg:T3} denote the vertices of the inner $5$-gon. They will
be needed below.) 
We show that in none of these case one can find a pair of equipotential
polygons. 

\begin{figure}[h]
\begin{center}
\begin{picture}(360,160)(0,0)
\put(120,20){\line(1,0){100}}
\put(220,20){\line(1,1){50}}
\put(120,20){\line(0,1){100}}
\put(120,120){\line(1,1){50}}
\put(170,170){\line(1,-1){100}}
\put(120,20){\line(3,1){150}}
\put(120,20){\line(1,3){50}}
\put(120,120){\line(1,-1){100}}
\put(120,120){\line(3,-1){150}}
\put(220,20){\line(-1,3){50}}

\put(106,11){$z_0$}
\put(210,11){$z_1$}
\put(275,70){$z_2$}
\put(155,175){$z_3$}
\put(100,115){$z_4$}

\put(170,25){$A$}
\put(203,38){$B$}
\put(193, 46){$\al$}
\put(201, 50){$\be$}
\put(190, 89){$\ga$}
\put(150, 100){$\delta$}
\put(146, 94){$\eps$}
\put(223,38){$C$}
\put(220,63){$D$}
\put(177,75){$K$}
\put(164,125){$F$}
\put(208,105){$E$}
\put(150,55){$J$}
\put(125,65){$I$}
\put(142,125){$G$}
\put(137,103){$H$}

\put(120,20){\circle*{3}}
\put(220,20){\circle*{3}}
\put(270,70){\circle*{3}}
\put(170,170){\circle*{3}}
\put(120,120){\circle*{3}}

\end{picture}
\end{center}
\caption{Chambers and their labeling  for $\conv(S)$ a $5$-gon.  \label{fg:T3}}
\end{figure}
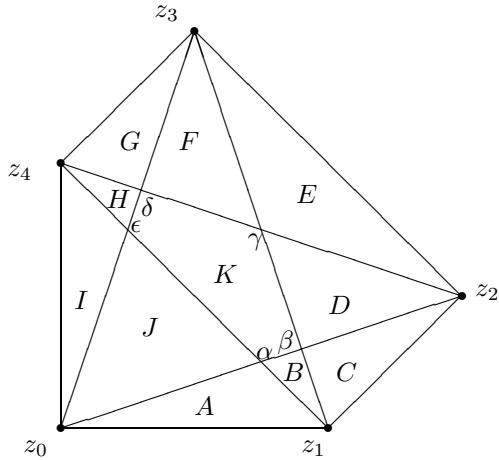

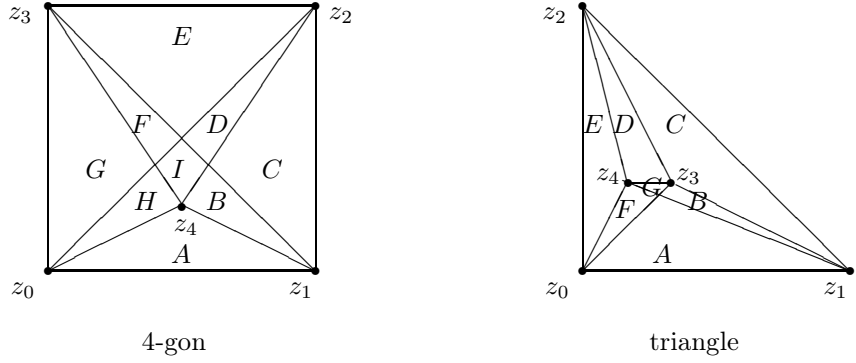
\begin{figure}[h]
\begin{center}
\begin{picture}(360,140)(0,0)
\put(40,40){\line(1,0){100}}
\put(140,40){\line(0,1){100}}
\put(40,40){\line(0,1){100}}
\put(40,40){\line(1,1){100}}
\put(40,140){\line(1,0){100}}
\put(40,140){\line(1,-1){100}}
\put(40,40){\line(2,1){50}}
\put(140,40){\line(-2,1){50}}
\put(40,140){\line(2,-3){50}}
\put(140,140){\line(-2,-3){50}}

\put(26,31){$z_0$}
\put(130,31){$z_1$}
\put(145,135){$z_2$}
\put(25,135){$z_3$}
\put(87,55){$z_4$}

\put(86,43){$A$}
\put(99,63){$B$}
\put(120,75){$C$}
\put(99,92){$D$}
\put(86,125){$E$}
\put(71,92){$F$}
\put(54,75){$G$}
\put(72,63){$H$}
\put(86,75){$I$}

\put(75,10){4-gon}

\put(40,40){\circle*{3}}
\put(140,40){\circle*{3}}
\put(140,140){\circle*{3}}
\put(40,140){\circle*{3}}
\put(90,64){\circle*{3}}

\put(240,40){\line(1,0){100}}
\put(340,40){\line(-2,1){65}}
\put(240,40){\line(0,1){100}}
\put(240,40){\line(1,1){33}}
\put(240,140){\line(1,-1){100}}
\put(240,140){\line(1,-2){33}}

\put(240,40){\line(1,2){17}}
\put(257,73){\line(1,0){15}}
\put(340,40){\line(-5,2){85}}
\put(240,140){\line(1,-4){17}}

\put(226,31){$z_0$}
\put(330,31){$z_1$}
\put(275,74){$z_3$}
\put(225,135){$z_2$}
\put(245,74){$z_4$}

\put(240,40){\circle*{3}}
\put(340,40){\circle*{3}}
\put(273,73){\circle*{3}}
\put(240,140){\circle*{3}}
\put(257,73){\circle*{3}}
  
  \put(266,43){$A$}
\put(279,63){$B$}
\put(271,92){$C$}
\put(251,92){$D$}
\put(240,92){$E$}
\put(252,60){$F$}
\put(262,68){$G$}

\put(265,10){triangle}

\end{picture}
\end{center}
\caption{Chambers and their labeling for $\conv(S)$ a $4$-gon or a triangle.  \label{fg:T4}}
\end{figure}

\begingroup
\everymath{\scriptstyle}
\scriptsize
$$Inc_1=\bordermatrix{~ & A  & B & C&D&E&F&G&H&I&J&K\cr 
\De_{012}&1&1&1&0&0&0&0&0&0&0&0\cr
\De_{013}&1&1&0&0&0&1&0&0&0&1&1\cr
\De_{014}&1&0&0&0&0&0&0&0&1&1&0\cr
\De_{023}&0&0&0&1&1&1&0&0&0&1&1\cr
\De_{024}&0&0&0&1&0&0&0&1&1&1&1\cr
\De_{034}&0&0&0&0&0&0&1&1&1&0&0}, $$

$$Inc_2={\bordermatrix{~ &A&B&C&D&E&F&G&H&I\cr 
                                       \De_{012}&1&1&1&1&0&0&0&1&1\cr
                                       \De_{013}&1&1&0&0&0&1&1&1&1\cr
                                       \De_{014}&1&0&0&0&0&0&0&0&0\cr
                                      \De_{023}&0&0&0&0&1&1&1&0&0\cr
                                      \De_{024}&0&0&0&1&0&0&0&1&1\cr
                                     \De_{034}&0&0&0&0&0&0&1&1&0}}, \
      Inc_3=\bordermatrix{~ &A&B&C&D&E&F&G\cr 
                                      \De_ {012}&1&1&1&1&1&1&1\cr
                                       \De_{013}&1&1&0&0&0&0&0\cr
                                      \De_ {014}&1&0&0&0&0&1&0\cr
                                     \De_ {023}&0&0&0&1&1&1&1\cr
                                      \De_{024}&0&0&0&0&1&0&0\cr
                                     \De_{034}&0&0&0&0&0&1&1}. $$
                                     
\endgroup

For brevity, we introduce notation $\frac{1}{2}\Ar{K}$ for the area of a polygon $K$. 
First, we need an elementary 
\begin{lemma}\label{lm:area} 
For an arbitrary triangle $\De_{\al\be\ga}$ and arbitrary secants $\al\eps$, $\be\delta$, see Fig.~\ref{fg:T5} the area of triangle $\De_{\al\be\zeta}$ is bigger than that of  $\De_{\eps\delta\zeta}$:
$$\Ar{\De_{\al\be\zeta}}>\Ar{\De_{\eps\delta\zeta}}.$$ 
\end{lemma}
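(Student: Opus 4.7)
The plan is to reduce the inequality to a one-dimensional comparison of segment lengths at $\zeta$, and then establish that comparison by an elementary affine-coordinate computation.

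\emph{Step 1: Reduction.} Since $\zeta$ lies on both secants $\al\eps$ and $\be\delta$, the two triangles $\De_{\al\be\zeta}$ and $\De_{\eps\delta\zeta}$ share the vertex $\zeta$ and the pairs of edges emanating from $\zeta$ lie along the \emph{same} two lines. If $\theta$ denotes the angle at $\zeta$ between those lines, the standard $\tfrac12 ab\sin\theta$ formula (together with the convention $\Ar{\cdot}=2\cdot\Area$) gives
\[
\Ar{\De_{\al\be\zeta}}=|\al\zeta|\cdot|\be\zeta|\sin\theta,\qquad
\Ar{\De_{\eps\delta\zeta}}=|\eps\zeta|\cdot|\delta\zeta|\sin\theta,
\]
so the lemma reduces to the single length inequality $|\al\zeta|\cdot|\be\zeta|>|\eps\zeta|\cdot|\delta\zeta|$. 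The angle $\theta$ is genuinely the same (not its supplement) for both triangles, because $\delta,\eps$ being interior to sides $\al\ga,\be\ga$ forces the quadrilateral $\al\be\eps\delta$ to be convex with the two secants as its diagonals, so $\zeta$ is strictly between $\al$ and $\eps$, and strictly between $\be$ and $\delta$.

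\emph{Step 2: The length inequality.} By affine invariance of length ratios along a line, I normalize so that $\al=(0,0)$, $\be=(1,0)$, $\ga=(0,1)$, and parameterize $\delta=(0,t)$, $\eps=(1-s,s)$ with $s,t\in(0,1)$. Intersecting the parametric equations of the lines $\al\eps$ and $\be\delta$ places $\zeta$ at parameter $t/(s+t-st)$ along $\al\eps$ and at $s/(s+t-st)$ along $\be\delta$, whence
\[
\frac{|\al\zeta|}{|\eps\zeta|}=\frac{t}{s(1-t)},\qquad \frac{|\be\zeta|}{|\delta\zeta|}=\frac{s}{t(1-s)}.
\]
Multiplying the two ratios,
\[
\frac{|\al\zeta|\cdot|\be\zeta|}{|\eps\zeta|\cdot|\delta\zeta|}=\frac{1}{(1-s)(1-t)}>1
\]
since $s,t\in(0,1)$, which is precisely the desired inequality.

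The whole argument is elementary and no step presents a real obstacle. A pleasant synthetic alternative I would at least mention is the exact identity
\[
\Ar{\De_{\al\be\zeta}}\cdot\Ar{\De_{\ga\delta\eps}}=\Ar{\De_{\eps\delta\zeta}}\cdot\Ar{\De_{\al\be\ga}},
\]
which can be derived by two applications of the elementary base-ratio formula $\Ar{PQR}/\Ar{PQR'}=|PR|/|PR'|$ (valid whenever $R,R'$ are collinear with $P$); the lemma then follows instantly from the manifest strict inequality $\Ar{\De_{\ga\delta\eps}}<\Ar{\De_{\al\be\ga}}$.
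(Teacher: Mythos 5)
Your proof is correct, but it takes a genuinely different route from the paper's. You reduce the statement to the product inequality $|\al\zeta|\cdot|\be\zeta|>|\eps\zeta|\cdot|\delta\zeta|$ by noting that the two triangles occupy vertical angles at $\zeta$, and you then compute the two one-dimensional ratios in normalized affine coordinates, obtaining the explicit value $\frac{|\al\zeta|\,|\be\zeta|}{|\eps\zeta|\,|\delta\zeta|}=\frac{1}{(1-s)(1-t)}>1$. The paper argues synthetically instead: it draws the line through $\al$ parallel to $\be\ga$, extends $\be\delta$ until it meets that line at a point $\eta$, shows that $\Ar{\De_{\al\be\zeta}}=\Ar{\De_{\eta\eps\zeta}}$ (both arise from the equal-area triangles $\De_{\al\be\eta}$ and $\De_{\al\eps\eta}$, which share the base $\al\eta$ and have equal heights since $\be$ and $\eps$ lie on the parallel line $\be\ga$, by removing the common piece $\De_{\al\zeta\eta}$), and then concludes because $\De_{\eps\delta\zeta}$ is properly contained in $\De_{\eta\eps\zeta}$, as $\delta$ lies strictly between $\zeta$ and $\eta$. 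Your computation buys an exact quantitative strengthening of the lemma --- the precise area ratio, equivalently the identity $\Ar{\De_{\al\be\zeta}}\cdot\Ar{\De_{\ga\delta\eps}}=\Ar{\De_{\eps\delta\zeta}}\cdot\Ar{\De_{\al\be\ga}}$ you mention, which I verified in your coordinates --- whereas the paper's construction is coordinate-free and avoids any discussion of which angle at $\zeta$ is meant; your justification of that point via the convexity of the quadrilateral $\al\be\eps\delta$ is adequate. Both arguments are complete and elementary.
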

\begin{figure}[h]
\begin{center}
\begin{picture}(360,170)(0,0)
\put(120,20){\line(1,0){100}}
\put(120,20){\line(1,1){75}}
\put(120,20){\line(-1,3){50}}
\put(120,20){\line(1,3){50}}
\put(220,20){\line(-1,3){50}}
\put(195,95){\line(-3,-2){65}}
\put(108,58){\line(3,-1){111}}
\put(108,59){\line(5,2){88}}

\put(106,11){$\al$}
\put(210,11){$\be$}
\put(155,175){$\ga$}
\put(124,53){$\delta$}
\put(197,95){$\eps$}
\put(143,35){$\zeta$}
\put(100, 58){$\eta$}
\put(55,175){$\kappa$}

\put(120,20){\circle*{3}}
\put(220,20){\circle*{3}}
\put(195,95){\circle*{3}}
\put(170,170){\circle*{3}}
\put(130,50){\circle*{3}}
\put(145,45){\circle*{3}}


\end{picture}
\end{center}
\caption{Illustration to Lemma~\ref{lm:area}  \label{fg:T5}}
\end{figure}
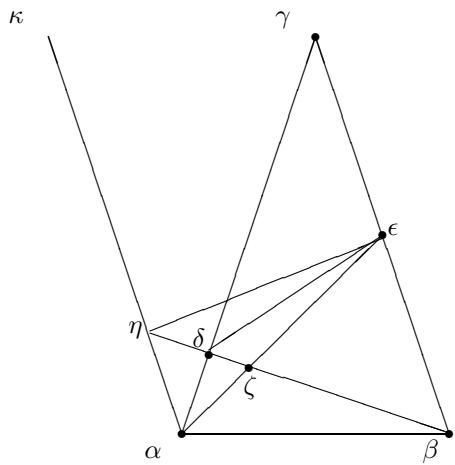
\begin{proof} Indeed, draw the line $\al\kappa$ parallel to $\beta\gamma$ and extend $\beta\delta$ till it hits $\alpha\kappa$. (The  intersection point of the latter lines is denoted by $\eta$.) Triangles $\De_{\al\be\zeta}$ and $\De_{\eta\eps\zeta}$ have equal area. Indeed, they are obtained from $\De_{\al\be\eta}$ and $\De_{\al\eps\eta}$, 
respectively, by removing $\De_{\al\zeta\eta}$.  Notice that $\De_{\al\be\eta}$ and $\De_{\al\eps\eta}$ have the same base $\al\eta$ and equal heights.
\end{proof}

\medskip

\noindent 
{\bf  Case 2.} Using labeling on the left part of Fig.~\ref{fg:T4} and \eqref{eq:dens} we conclude that densities $d_{012}, d_{014},$  $d_{023}, d_{034}$ are positive while $d_{013}, d_{024}$ are negative. From chambers $E$ and $C$ we conclude
$d_{023}=d_{012}=1$. Then from chamber $D$ we have that either $d_{024}=-1$ or $-2$. The second case leads to $d_{013}=0$,  contradiction.  Thus $d_{024}=-1$ which from chamber $I$ gives $d_{013}=-1$. Finally, $d_{034}=d_{014}=1$. Thus chambers 
$A,C,E,G$ have density $1$, chamber $I$ has density $-1$ and the remaining chambers have vanishing density. 
We need to show that $\Ar{I}<\Ar{A}+\Ar{C}+\Ar{E}+\Ar{G}$. 
We will show that actually $\Ar{I}<\Ar{C}+\Ar{G}$. 
Cut $I$ into two triangles by drawing its diagonal  connecting $z_4$ with non-neighboring vertex  
$p$ of $I$ (lying strictly above $z_4$ in the left part of Fig.~\ref{fg:T4}). Extending $z_3p$ and $z_0z_4$ we get a triangle containing $G$ and the left half of $I$ and we can apply Lemma~\ref{lm:area}. 
Analogously,  extending $z_2p$ and $z_1z_4$ we get a triangle containing $C$ and the right half of $I$ and we can apply Lemma~\ref{lm:area}. 
Thus the required measure  does not exist. 

\noindent 
{\bf  Case 3.} Using labeling on the right part of Fig.~\ref{fg:T4} and formulas~\eqref{eq:dens} we again conclude that densities $d_{012}, d_{014},$  $d_{024}$ are negative, while $d_{013}, d_{023}, d_{034}$ are positive. Similar considerations as above give
$d_{012}=d_{014}=d_{024}=-1$ and $d_{013}=d_{023}=d_{034}=1$. Thus, the densities of $A,C,E$ are $-1$ and the density of $G$ is $1$. 
In fact, $\Ar{G}<\Ar{A}$ already. Indeed, extending the interval $z_0z_4$ and $z_1z_3$ till they intersect at a point, say $p$ we get the triangle $z_0z_1p$ to which we apply Lemma~\ref{lm:area}. Thus the required measure  does not exist.

\noindent 
{\bf  Case 1.} Using labeling on Fig.~\ref{fg:T3} and \eqref{eq:dens} we see that densities $d_{012}, d_{014},$  $d_{023}, d_{034}$ are positive while $d_{013}, d_{024}$ are negative. Assuming that the densities of all chambers attain only values $0,\pm 1$ and looking at chambers $C,E,G$ we get that $d_{023}=d_{012}=d_{034}=1$. Looking at chamber $D$ we conclude that $d_{024}=-1$. (It might be equal $-2$ as well but then looking at chamber $K$ we have to conclude that $d_{013}=0$ which is impossible.) From chamber $K$ we get $d_{013}=-1$ and from chamber $I$ we get $d_{014}=1$.  Thus the density in chambers 
$A, C, E, G, I$ equals $1$, in chamber $K$ it equals $-1$ and it vanishes in the remaining chambers.  Notice that the total mass of the measure should vanish. 
To see that this cannot happen, we show that $\Ar{K}<\Ar{A}+\Ar{C}+\Ar{E}+\Ar{G}+\Ar{I}$. 
Using Lemma~\ref{lm:area} 
we conclude that $\Ar{A}>\Ar{\De_{\al\be\eps}}$,  $\Ar{C}>\Ar{\De_{\al\be\ga}}$,   
$\Ar{E}>\Ar{\De_{\be\ga\delta}}$, $\Ar{G}>\Ar{\De_{\delta\eps\ga}}$, and 
$\Ar{I}>\Ar{\De_{\eps\al\delta}}$, see Fig.~\ref{fg:T3}. 
Triangles  $\De_{\al\be\eps}$, $\De_{\al\be\ga}$, $\De_{\be\ga\delta}$, 
$\De_{\delta\eps\ga}$, $\De_{\eps\al\delta}$ pairwise overlap. 
These overlapping consists of $5$ smaller triangles inside $K$. 
The complement in $K$ to the union of triangles $\De_{\al\be\eps}$, $\De_{\al\be\ga}$, $\De_{\be\ga\delta}$, 
$\De_{\delta\eps\ga}$, $\De_{\eps\al\delta}$  is a small $5$-gon inside $K$. Now we can use these $5$ small triangles to cover the small $5$-gon inside $K$. We get exactly the same situation as the original one and  we can apply the same argument as we did and cover a substantial part of the small $5$-gon etc. Continuing this process we will in infinitely many steps exhaust the original $5$-gon $K$. 
Thus the required measure  does not exist.  
\end{proof}

To prove Theorem~~\ref{th:cone} we need the following observation. 

\begin{lemma}\label{lm:4gon}
The convex hull of the standard measures of  $4$ triangles as in Case a) Fig.~\ref{fg:T2}, i.e. two pairs forming a flip is a plane quadrangle. The convex hull of the standard measures of  $4$ triangles as in Case b) Fig.~\ref{fg:T2} is a plane triangle.
\end{lemma}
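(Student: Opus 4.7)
The plan is to invoke Proposition~\ref{pr:basis}: the space $\M^\bR(\{z_0,z_1,z_2,z_3\})$ has dimension $3$, so the four standard measures $\mu_{\Delta_{012}}, \mu_{\Delta_{013}}, \mu_{\Delta_{023}}, \mu_{\Delta_{123}}$ span $\M^\bR$ and satisfy a unique (up to a scalar) linear relation, which in each of the two cases is exactly the relation pictured in Figure~\ref{fg:T2}. The shape of the convex hull in each case is then read off directly from that single relation.

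In Case a) the flip relation $\mu_{\Delta_{013}} + \mu_{\Delta_{123}} = \mu_{\Delta_{012}} + \mu_{\Delta_{023}}$ has coefficients $(1,1,-1,-1)$ summing to zero, so it is an \emph{affine} relation and the four measures lie in a common $2$-dimensional affine subspace of $\M^\bR$. Rewriting it as $\frac{1}{2}(\mu_{\Delta_{013}} + \mu_{\Delta_{123}}) = \frac{1}{2}(\mu_{\Delta_{012}} + \mu_{\Delta_{023}})$ shows that the segments $[\mu_{\Delta_{013}},\mu_{\Delta_{123}}]$ and $[\mu_{\Delta_{012}},\mu_{\Delta_{023}}]$ share a common midpoint; they are therefore the diagonals of a parallelogram with the four measures as vertices, i.e. a plane quadrangle. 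Non-degeneracy (no three vertices collinear) follows from Proposition~\ref{pr:basis}, since any three of the four measures share a common node and hence form a basis of the ambient $3$-dimensional space.

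In Case b), where $z_3$ is interior to $\Delta_{012}$, the three smaller triangles tile $\Delta_{012}$, yielding $\mu_{\Delta_{012}} = \mu_{\Delta_{013}} + \mu_{\Delta_{023}} + \mu_{\Delta_{123}}$; the coefficients here sum to $-2$, so the honest convex hull in $\M^\bR$ is $3$-dimensional, and a planar picture only emerges from the cross-section of the positive cone spanned by the four measures. Concretely, normalize each $\mu_{\Delta_{ijk}}$ by its total mass $A_{ijk} := \Area(\Delta_{ijk})$ to the probability measure $\nu_{\Delta_{ijk}}$. The tiling identity $A_{012} = A_{013} + A_{023} + A_{123}$ then converts the relation into
\[
\nu_{\Delta_{012}} = \frac{A_{013}}{A_{012}}\,\nu_{\Delta_{013}} + \frac{A_{023}}{A_{012}}\,\nu_{\Delta_{023}} + \frac{A_{123}}{A_{012}}\,\nu_{\Delta_{123}},
\]
a genuine convex combination. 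Hence $\nu_{\Delta_{012}}$ lies strictly inside the triangle spanned by the other three probability measures, which is non-degenerate again by Proposition~\ref{pr:basis}, and the convex hull of the four is this plane triangle.

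The only conceptual subtlety, and the main thing to get right, is the uniform interpretation of ``convex hull'' across the two cases: in Case a) it is literally the affine convex hull in $\M^\bR$, while in Case b) it must be read as the cross-section of the cone (equivalently, the convex hull of the probability-measure normalizations). Once this interpretation is pinned down, the argument reduces in each case to a direct inspection of the single linear relation supplied by Proposition~\ref{pr:basis}.
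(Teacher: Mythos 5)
Your argument follows the same route as the paper's own proof, which consists of the single sentence that the lemma is ``obvious from the relations'' $\mu_{\Delta_{013}}+\mu_{\Delta_{123}}=\mu_{\Delta_{012}}+\mu_{\Delta_{023}}$ and $\mu_{\Delta_{012}}=\mu_{\Delta_{013}}+\mu_{\Delta_{023}}+\mu_{\Delta_{123}}$; you have simply supplied the details. Your treatment of Case a) is correct and complete: the relation is affine, the two segments $[\mu_{\Delta_{013}},\mu_{\Delta_{123}}]$ and $[\mu_{\Delta_{012}},\mu_{\Delta_{023}}]$ share a midpoint and are therefore crossing diagonals, and non-collinearity of any three of the four measures follows from Proposition~\ref{pr:basis} because any three of the four triangles on a $4$-point set share a common node. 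More importantly, your remark about Case b) is a genuine and correct criticism of the statement as literally written: the unique linear relation among the four standard measures has coefficient sum $-2\neq 0$, so the four measures are affinely \emph{independent} and their convex hull in $\M^\bR(S)$ is a $3$-simplex, not a plane triangle. The intended content --- and the only thing used in the proof of Theorem~\ref{th:cone} --- is that $\mu_{\Delta_{012}}$ lies in the cone generated by the other three measures, equivalently that after normalizing each measure to unit total mass (a cross-section of the cone) the four points do lie in a plane triangle; your rescaling by areas, via $\Area\,\Delta_{012}=\Area\,\Delta_{013}+\Area\,\Delta_{023}+\Area\,\Delta_{123}$, is exactly the right way to make this precise. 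So your proof is correct modulo this (necessary) reinterpretation of ``convex hull'' in Case b), which you flag explicitly and which the paper glosses over.
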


\begin{proof}
Obvious from the relations given above Fig.~\ref{fg:T2}. 
\end{proof}

\begin{proof}[Proof of Theorem~\ref{th:cone}]
Indeed if a triangle $\De$ contains an interior point other than its vertices than $\mu_\De$ is the sum of three triangles in which it is subdivided by an inner vertex, see Lemma~\ref{lm:4gon}.  (Recall that   $S$ is non-degenerate by assumption.) Thus $\mu_\De$ is not an extreme ray. On the other hand, assume that no point in $S$ other than its vertices is contained in $\De$ and $\mu_{\De}$ is a linear combination of the standard measures of some other triangles with vertices in $S$ with positive coefficients. Since no such triangle can  be contained strictly inside $\De$ by assumption and all coefficients are positive we get that any such linear combination necessarily 
has positive density somewhere outside $\De$, contradiction. \end{proof}

\section{Open problems}\label{s3}

\noindent
{\bf 1.}  Theorem~\ref{th:more} gives  the dimension of $\M_{null}^\bR(S)$ for non-degenerate $S$. Its dimension for arbitrary $S$ is unclear. On one hand, if $S$ is degenerate then $\dim \M^\bR(S)$ decreases. On the other hand, the number of equations imposed on the densities might also decrease. It seems highly plausible that $\dim \M_{null}^\bR(S)$ for an arbitrary $S$ depends only on non-oriented matroid associated to this set, see e.g. \cite{GeSe}. An algorithm calculating this dimension is given in \cite{Al2}. 

\medskip
\noindent
{\bf 2.}   Besides the cone $\mathfrak K(S)\subset \M^\bR(S)$ one can  introduce a more important, bigger, 
cone  $\mathfrak K_{pos}(S)\subset \M^\bR(S)$ where  $\mathfrak K_{pos}(S)\supset \mathfrak K(S)$ consists of all non-negative measures from $\M^\bR(S)$. 

\begin{conjecture} The combinatorial structure of $\mathfrak K_{pos}(S)$ depends only on the oriented matroid associated to $S$.
\end{conjecture}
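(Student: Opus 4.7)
The plan is to show that $\mathfrak K_{pos}(S)$ is a polyhedral cone cut out by $\{0,1\}$-valued linear inequalities whose incidence pattern is an oriented matroid invariant. Fix the basis of $\M^\bR(S)$ consisting of the triangles $\Delta_{0,i,j}$, $1\le i<j\le n$ (Proposition~\ref{pr:basis}), so that every $\mu\in\M^\bR(S)$ is written as $\mu=\sum c_{ij}\mu_{\Delta_{0,i,j}}$. Its density is piecewise constant on the chambers of the line arrangement $Arr(S)$, and on a chamber $C\subset\conv(S)$ takes the value $\sum_{\Delta_{0,i,j}\supset C} c_{ij}$. Consequently $\mathfrak K_{pos}(S)$ is carved out of $\M^\bR(S)\cong\bR^{\binom{n}{2}}$ by one inequality $\sum_{\Delta_{0,i,j}\supset C} c_{ij}\ge 0$ per chamber $C$ of $\conv(S)\setminus Arr(S)$; chambers lying outside $\conv(S)$ impose no constraint. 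Hence the cone is entirely encoded by the $\{0,1\}$-incidence matrix between inner chambers and basis triangles.

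The key step is to verify that this incidence matrix depends only on the oriented matroid of $S$. The oriented matroid records, for every triple $i,j,k$, the sign $\mathrm{sign}[ijk]$, which says on which side of the line $\ell_{ij}$ the point $z_k$ lies. From this data one reconstructs the oriented matroid of the planar arrangement $Arr(S)$ and in particular the list of its chambers together with their sign vectors indexed by pairs $\{i,j\}$. Membership of such a chamber $C$ in the basis triangle $\Delta_{0,i,j}$ becomes a conjunction of three sign conditions---namely that $C$ lies on the same side of $\ell_{0i}$, $\ell_{0j}$, $\ell_{ij}$ as the respective opposite vertex---each readable from the recorded signs. Similarly, the property ``chamber lies inside $\conv(S)$'' is expressible in sign data via the edges of the convex hull, whose list is itself an oriented matroid invariant of $S$.

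Once this combinatorial reconstruction is in hand, any two $S,S'$ with identical labelled oriented matroid produce a canonical bijection between their inner chambers that preserves containment in every basis triangle $\Delta_{0,i,j}\leftrightarrow\Delta'_{0,i,j}$. The linear isomorphism $\Phi:\M^\bR(S)\to\M^\bR(S')$ sending $\mu_{\Delta_{0,i,j}}\mapsto\mu_{\Delta'_{0,i,j}}$ then carries each defining half-space of $\mathfrak K_{pos}(S)$ onto the corresponding defining half-space of $\mathfrak K_{pos}(S')$, giving an isomorphism of cones and hence of face lattices. I expect the main technical obstacle to lie in the key step: although reconstructing the combinatorics of a planar line arrangement from the sign data of the dual point configuration is folklore, one must carry through the extra structure (distinguished convex hull, distinguished vertex $z_0$, ordered list of basis triangles) and handle chambers meeting the boundary of $\conv(S)$ carefully, so it may well be cleanest to phrase the whole argument in the language of \emph{affine} oriented matroids rather than ordinary ones.
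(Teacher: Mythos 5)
First, a point of calibration: the statement you were asked to prove is labelled a \emph{conjecture} in the paper, stated among the open problems of Section~3; the paper contains no proof of it (the authors write that they ``plan to study this fascinating subject in the future''), so your proposal must stand entirely on its own merits.

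It does not, and the failure is exactly at your self-identified key step. The oriented matroid of the point configuration $S$ --- i.e., the signs of all $[ijk]$ --- does \emph{not} determine the combinatorics of the line arrangement $Arr(S)$, hence neither the list of chambers nor the chamber--triangle incidence matrix. The chirotope records only on which side of each line $\ell_{ij}$ each point $z_k$ lies; it records nothing about concurrences among three lines $\ell_{ij}$, $\ell_{kl}$, $\ell_{mn}$ spanned by pairwise disjoint index pairs. Concretely, take $S$ to be the six vertices of a regular hexagon: the three main diagonals meet at the center. Perturb one vertex slightly: no triple of points becomes collinear and no sign $[ijk]$ changes, so the oriented matroid is unchanged, yet the three diagonals now bound a new small triangular chamber. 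Thus even the \emph{number} of your defining inequalities is not an oriented-matroid invariant, and your claimed containment-preserving bijection between inner chambers cannot exist; the map $\Phi$ has nothing to match the extra half-space with. The paper itself flags this phenomenon in the remark on incidence matrices (citing Alekseyevskaya--Gelfand--Zelevinsky): ``the number of chambers \ldots can change if we deform $S$ within the class of non-degenerate sets,'' and such deformations preserve the oriented matroid. Note also that your suggested repair --- passing to affine oriented matroids --- does not help, since the affine oriented matroid of $S$ still does not see these concurrences.

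Your first step is fine as far as it goes: $\mathfrak K_{pos}(S)$ is indeed cut out in the basis of Proposition~\ref{pr:basis} by one $\{0,1\}$-inequality per chamber, and if the incidence matrices matched, your $\Phi$ would carry cone to cone and faces to faces. But what the conjecture actually requires --- and what makes it genuinely open --- is showing that the inequalities attached to the oriented-matroid-invisible ``extra'' chambers are redundant for the cone (or at least do not change its face lattice): in the hexagon example, that nonnegativity of the density on the chambers surrounding the small central triangle already forces nonnegativity on that triangle. As written, your argument proves a statement --- oriented-matroid invariance of the chamber/triangle incidence matrix --- that is false, and the conjecture remains unresolved.
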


Already for generic configurations $S$ with $6$ points  the combinatorial structure of $\mathfrak K_{pos}(S)$ and, in 
particular, the set of its extreme rays seems to be quite complicated. We plan to study  this fascinating subject in the future.   

\medskip
\noindent
{\bf 3.}  Notice that we have a natural linear map $\Psi_\mu: \M^\bR(S)\to Rat_n$ obtained by associating to each measure $\mu\in \M^\bR(S)$ its normalized generating function \eqref{eq:Psi}. Here $Rat_n$ is the linear space of rational functions of the form  $R(u)=\frac{P(u)}{\prod_{j=0}^n(1-z_ju)}, \;\; \deg P(u)\le n-2$ having real constant term. Obviously, $\dim Rat_n=2n-3$ and using Theorem~\ref{th:more} we obtain that $\M^\bR(S)$  is mapped onto $Rat_n$. 
The following question is very natural in connection with the inverse problem for the class of non-negative measures. 
 
 \begin{problem}
 Describe the extreme rays/faces of the  image cones $\Psi_\mu( \mathfrak K(S))$  and $\Psi_\mu(\mathfrak K_{pos}(S)$  in $Rat_n$.     
 \end{problem}

\medskip
\noindent
{\bf 4.}  We have an example of a pair of equipotential polygons with $|S|=6$,  
see Fig.~\ref{fg:TAD}. 

\begin{problem} Describe all $6$-tuples $S$ admitting a pair of equipotential polygons. 
\end{problem}

\end{document}